\newcolumntype{L}{>{$}l<{$}} 
\newtheorem{theorem}{Theorem}[section]
\newtheorem{lemma}[theorem]{Lemma}
\newtheorem{cor}[theorem]{Corollary}
\newtheorem{prop}[theorem]{Proposition}
\newtheorem{setup}[theorem]{Setup}
\theoremstyle{definition}
\newtheorem{definition}[theorem]{Definition}
\newtheorem{example}[theorem]{Example}
\theoremstyle{remark}
\newtheorem{remark}[theorem]{Remark}
\newtheorem{the context}[theorem]{The Context}
\numberwithin{equation}{theorem}
\numberwithin{equation}{section}
\newcommand{\pd}{\operatorname{pd}}
\newcommand{\rank}{\operatorname{rank}}
\newcommand{\grade}{\operatorname{grade}}
\newcommand{\ext}{\operatorname{Ext}}
\newcommand{\ideal}[1]{\mathfrak{#1}}
\newcommand{\m}{\ideal{m}}
\renewcommand{\geq}{\geqslant}
\renewcommand{\leq}{\leqslant}
\renewcommand{\hom}{\Hom}
\newcommand{\Hom}{\operatorname{Hom}}
\newcommand{\maps}[5]{\xymatrix{#1 \ar[r]^-{#3} & #2 \\
#4 \ar@{|->}[r] & #5 \\}}
\newcommand{\mfa}{\mathfrak{a}}
\def\w{\wedge}
\begin{document}
\title{DG Structure on the Length 4 Big From Small Construction}

\author{Keller VandeBogert }
\date{\today}

\maketitle

\begin{abstract}
    The big from small construction was introduced by Kustin and Miller in \cite{bfs} and can be used to construct resolutions of tightly double linked Gorenstein ideals. In this paper, we expand on the DG-algebra techniques introduced in \cite{mf} and construct a DG $R$-algebra structure on the length $4$ big from small construction. The techniques employed involve the construction of a morphism from a Tate-like complex to an acyclic DG $R$-algebra exhibiting Poincar\'e duality. This induces homomorphisms which, after suitable modifications, satisfy a list of identities that end up perfectly encapsulating the required associativity and DG axioms of the desired product structure for the big from small construction.  
\end{abstract}

\section{Introduction}

Let $(R,\m)$ denote a local commutative Noetherian ring. Any quotient $R/I$ with projective dimension $2$ has minimal free resolution that admits the structure of a unique commutative associative differential graded (DG) $R$-algebra. Buchsbaum and Eisenbud (see \cite{BE}) establish a similar result for quotients $R/I$ of $R$ with projective dimension $3$, where the DG structure is no longer unique. 

For quotients $R/I$ with projective dimension at least $4$, it is no longer guaranteed that there exists an \emph{associative} DG structure on the minimal free resolution of $R/I$. A standard counterexample is given by the homogeneous minimal free resolution of the ideal $(x_1^2,x_1x_2,x_2x_3,x_3x_4,x_4^2) \subset k[x_1,x_2,x_3,x_4]$ (see \cite[Theorem 2.3.1]{inffreeres}). In the case that $R/I$ is a Gorenstein ring with projective dimension $4$, it is proved by Kustin and Miller (see \cite{char2} when characteristic $\neq 2$) and Kustin (see \cite{char3} when characteristic $\neq 3$, \cite{l4res} for a more general version) that the minimal free resolution \emph{does} admit the structure of a commutative associative DG $R$-algebra exhibiting Poincar\'e duality (see Definition \ref{dg}). For Gorenstein rings $R/I$ with projective dimension at least $5$, it is no longer guaranteed that an associative DG structure exists on the minimal free resolution (see \cite{gr5gor}). 

The existence of DG $R$-algebra structures on resolutions implies many desirable properties of the module being resolved. Indeed, the aforementioned fact that every length $3$ resolution of a cyclic module is a DG $R$-algebra is then used to deduce the well-known structure theorem for the resolution of a grade $3$ Gorenstein ideal (see \cite[Theorem 2.1]{BE}). Work of Avramov (see \cite[Section 3]{av81}) shows that if $R$ is a regular local ring and the minimal free resolution of a quotient $R/I$ is a DG $R$-algebra, then the Poincar\'e series of $R/I$ may be written in terms of the Poincar\'e series of the Koszul homology algebra $H(K^{R/I})$, which is a finite-dimensional vector space. 

There are many examples of ``canonical" resolutions admitting DG $R$-algebra structures. The Koszul complex is a DG $R$-algebra exhibiting Poincar\'e duality with product given by exterior multiplication. The standard Taylor resolution for monomial ideals, though not always minimal, is also a DG $R$-algebra (see \cite[2.1]{av81}). Similarly, it is proved in \cite{hem89} that the well-known $L$/$K$ complexes introduced by Buchsbaum and Eisenbud (see \cite{be75}) and Buchsbaum (see \cite{b78}) are DG $R$-algebras. A collection of results of a more combinatorial flavor related to the existence of DG structures on hull/Lyubeznik resolutions and resolutions of monomial ideals in general may be found in \cite{ka2019}.

In practice, there are $3$ common techniques of constructing DG structures on resolutions. The first two techniques use the fact that every resolution admits the structure of a DG $R$-algebra that is associative up to homotopy by using induced comparison maps from the so-called \emph{symmetric square} complex. For sufficiently short resolutions, these homotopies may be $0$ maps (see \cite{BE}). If not, then a modification procedure may be used to construct products that are associative (see \cite{l4res}). 

The third approach is to record an explicit multiplication and check by hand that all of the relevant DG axioms are satisfied. This can be done easily in the case of the Koszul complex. This is also the approach used to show that the Taylor resolution is a DG $R$-algebra. Indeed, this is the approach that will be employed in the proof of Theorem \ref{thm:BFSisDG}.

The purpose of the current paper is to expand on techniques introduced in \cite{mf} and explore applications related to the construction of DG structures on particular types of complexes. More precisely, we construct a DG structure on the length $4$ ``Big From Small Construction" (see Definition \ref{bfs}) introduced in \cite{bfs}. In the case where the characteristic of $R$ is not $2$, this was already proved in \cite{minres}, heavily relying on the construction of a ``complete higher order multiplication" as given by Palmer in \cite{algstruct}. 

In the present case, the DG structure is built from the ground up, and the construction is totally characteristic free. It is the intent of the author to illustrate the use of Tate-like complexes to construct interesting homomorphisms whose properties are often ideal for inducing DG structures on complexes. We construct the DG structure on the big from small construction since its structure is particularly well-suited to quotients by DG ideals; indeed, in the case where $R$ has characteristic $\neq 2$, it is shown in \cite{minres} that the minimal resolution of a grade $4$ almost complete intersection admits the structure of a commutative associative DG $R$-algebra by successively taking DG quotients of the big from small construction applied to the appropriate Gorenstein ideal. 

The paper is organized as follows: in Section \ref{dgalg}, basic facts on DG $R$-algebras with divided powers and exhibiting Poincar\'e duality are presented. The basic setup to be used throughout the paper, Setup \ref{setup2}, is also introduced. In Section \ref{bigsmall}, we give a brief review of the big from small construction and its relation to the notion of tight double linkage (see Definition \ref{tdl}) of Gorenstein ideals.

Section \ref{betamap} is where the aforementioned techniques are introduced and employed. To be precise, we construct a Tate-like complex $B$ along with a morphism of complexes $c : B \to K[-2]$, where $K$ is a length $3$ Koszul complex. By acyclicity, there is an induced chain homotopy $h$; moreover, this homotopy may be modified in such a way that certain homomorphisms induced by Poincar\'e duality will satisfy the required associativity relations for the contended DG structure on the big from small construction. In Section \ref{itsdg}, we prove that the length $4$ big from small construction admits the structure of a commutative associative DG $R$-algebra exhibiting Poincar\'e duality. As previously mentioned, this amounts to writing down an explicit multiplication and checking that all of the relevant DG axioms are satisfied. 

\section{Background on DG $R$-Algebras}\label{dgalg}

In this section, we introduce some necessary background, especially on DG $R$-algebras. We recall a result of Kustin on the algebra structures of self-dual resolutions of length $4$ (see Theorem \ref{l4resdg}). Finally, we conclude with the setup to be used for the rest of the paper. 

Throughout the paper, $R$ will denote a commutative Noetherian ring.

\begin{definition}\label{def:stdDefs}
The \emph{grade} of a proper ideal $I \subseteq R$ is the length of the longest regular sequence on $R$ in $I$. An ideal $I$ is \emph{perfect} if $\grade (I) = \pd_R (I)$ (the projective dimension). An ideal of grade $g$ is called \emph{Gorenstein} if it is perfect and $\ext_R^g (R/I , R) \cong R/I$.

A complex $F_\bullet : \cdots \to F_2 \to F_1 \to F_0 \to 0$ is called \emph{acyclic} if the only nonzero homology occurs at the $0$th position. A complex $F_\bullet$ is a \emph{free resolution} of $R/I$ if $H_0 (F_\bullet ) = R/I$ and all $F_i$ are free. 

Let $F$ denote a free $R$-module. The divided power algebra $D_\bullet (F)$ is the graded dual of the symmetric algebra $S_\bullet (F)$, which satisfies the following properties:
\begin{enumerate}[(a)]
    \item $x^{(0)} = 1$, $x^{(1)} = x$, and $x^{(k)} \in D_k F$ for $x \in F$.
    \item $x^{(p)} x^{(q)} = \binom{p+q}{q} x^{(p+q)}$ for $x \in F$,
    \item $(x+y)^{(p)} = \sum_{k=0}^p x^{(p-k)} y^{(k)}$ for $x$, $y \in F$. 
    \item $(r x)^{(p)} = r^p x^{(p)}$ for $r \in R$, $x \in F$.
    \item $(x^{(p)})^{(q)} = \frac{(pq)!}{q! (p^q)!} x^{(pq)}$ for $x \in F$.
\end{enumerate}
Observe that by the divided power algebra structure, given $x, \ x' \in F$, 
$$(x+x')^{(2)} = x^{(2)} + x \cdot x' +x'^{(2)}$$
whence it suffices to determine the action of a homomorphism with domain $D_2 (F)$ on elements of the form $x^{(2)}$. 

Let $A$, $B$, and $C$ be $R$-modules. A pairing $A \otimes_R B \to C$ is \emph{perfect} if the induced maps
$$A \to \hom_R (B , C) \quad \textrm{and} \quad B \to \hom_R (A , C)$$
are isomorphisms.
\end{definition}

\begin{definition}\label{dg}
A \emph{differential graded $R$-algebra} $(F,d)$ (DG $R$-algebra) over a commutative Noetherian ring $R$ is a complex of finitely generated free $R$-modules with differential $d$ and with a unitary, associative multiplication $F \otimes_R F \to F$ satisfying
\begin{enumerate}[(a)]
    \item $F_i F_j \subseteq F_{i+j}$,
    \item $d_{i+j} (x_i x_j) = d_i (x_i) x_j + (-1)^i x_i d_j (x_j)$,
    \item $x_i x_j = (-1)^{ij} x_j x_i$, and
    \item $x_i^2 = 0$ if $i$ is odd,
\end{enumerate}
where $x_k \in F_k$. A DG $R$-algebra $F$ is a DG$\Gamma$ $R$-algebra if for each positive even index $i$ and each $x_i \in F_i$, there is a family of elements $\{ x_i^{(k)} \}$ satisfying the divided power axioms (see Definition \ref{def:stdDefs}) in the module $D_k F_i$, along with the extra condition $d_{ik}(x_i^{(k)}) = d_i (x_i) x_i^{(k-1)}$.

A DG $R$-algebra $F$ exhibits \emph{Poincar\'e duality} if there is an integer $m$ such that $F_i =0$ for $i>m$, $F_m \cong R$, and for each $i$ the multiplication map
$$F_i \otimes_R F_{m-i} \to F_m$$
is a perfect pairing of $R$-modules. Given a DG $R$-algebra $F$ such that $F_i =0$ for $i>m$, an orientation isomorphism is a choice of isomorphism
$$[ - ]_F : F_m \to R,$$
given that such an isomorphism exists.
\end{definition}

\begin{remark}
Let $F$ be a DG $R$-algebra exhibiting Poincar\'e duality with $m$ such that $F_i = 0$ for $i >m$. Given any $R$-module $M$, observe that in order to specify an $R$-module homomorphism $M \to F_i$ for any $i \leq m$, it suffices to construct a morphism $M \otimes_R F_{m-i} \to F_m$. Any such map induces a morphism $M \to \hom_R (F_{m-i} , F_m)$, and by Poincar\'e duality, $\hom_R (F_{m-i} , F_m) \cong F_i$. 
\end{remark}

\begin{example}
Let $F$ be a free $R$-module of rank $n$ and $\psi : F \to R$ an $R$-module homomorphism. The Koszul complex $(K,k)$ is the complex with $K_i := \bigwedge^i F$ and differential $k_i : K_i \to K_{i-1}$ defined as the composition
\begingroup\allowdisplaybreaks
\begin{align*}
    \bigwedge^i F &\xrightarrow{\Delta} F \otimes \bigwedge^{i-1} F \\
    &\xrightarrow{\psi \otimes 1} R \otimes \bigwedge^{i-1} F \cong \bigwedge^{i-1} F \quad (\Delta \ \textrm{denotes comultiplication}).
\end{align*}
\endgroup
Then the Koszul complex is a DG$\Gamma$ $R$-algebra exhibiting Poincar\'e duality with product defined by exterior multiplication. 
\end{example}

\begin{lemma}\label{htpmod}
Let $c : (B , b) \to (A , a)$ be a morphism of complexes of projective modules with $(A,a)$ acyclic. Assume $B_i = B_i' \oplus B_i''$, $b_i (B_i') \subseteq B_{i-1}'$ and $c_i (B_i') = 0$ for each $i$, with $B_0 = B_0'$. Then there exists a homotopy $h : B \to A[1]$ such that $h_i (B_i') = 0$ for each $i$.
\end{lemma}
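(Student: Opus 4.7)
The plan is to construct $h$ recursively on degree, maintaining the vanishing condition $h_i(B_i') = 0$ throughout, so that the null-homotopy equation $c_i = a_{i+1} h_i + h_{i-1} b_i$ holds at every stage (with $h_j := 0$ for $j < 0$). The base case $i = 0$ is essentially free: since $B_0 = B_0'$ and $c_0(B_0') = 0$, one simply sets $h_0 := 0$, which satisfies both the homotopy equation and the required vanishing trivially.

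For the inductive step, suppose $h_0, \dots, h_{i-1}$ have been defined compatibly. I would use the decomposition $B_i = B_i' \oplus B_i''$ and build $h_i$ piecewise. Setting $h_i|_{B_i'} := 0$ is forced by the conclusion, and it is consistent with the homotopy equation on $B_i'$: the right-hand side reduces to $h_{i-1}(b_i(B_i'))$, which vanishes because $b_i$ sends $B_i'$ into $B_{i-1}'$ and $h_{i-1}$ kills $B_{i-1}'$ by induction, while the left-hand side $c_i|_{B_i'}$ vanishes by hypothesis. On $B_i''$, consider $\phi := c_i|_{B_i''} - h_{i-1} b_i|_{B_i''} : B_i'' \to A_i$. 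A direct computation using the chain-map relation $a_i c_i = c_{i-1} b_i$, the inductive equation in degree $i-1$ rewritten as $a_i h_{i-1} = c_{i-1} - h_{i-2} b_{i-1}$, and $b_{i-1} b_i = 0$ yields $a_i \phi = 0$. For $i \geq 1$, acyclicity supplies $\ker(a_i) = \im(a_{i+1})$, so $\phi$ factors through $\im(a_{i+1})$; since $B_i''$ is a direct summand of the projective module $B_i$ and is hence itself projective, $\phi$ lifts through the surjection $a_{i+1} : A_{i+1} \twoheadrightarrow \im(a_{i+1})$ to produce $h_i|_{B_i''}$, completing the induction.

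The only subtlety in the argument is that acyclicity in the sense of this paper permits nonzero homology at position $0$, so the lifting step used in the inductive construction would fail at $i = 0$. This is exactly what the hypothesis $B_0 = B_0'$ addresses: it forces $c_0 = 0$ and makes the base case degenerate, so the lifting is only ever invoked for $i \geq 1$, where exactness of $A$ does apply. No genuine obstacle arises; the lemma is best understood as a projective-lifting argument in which the decomposition $B = B' \oplus B''$ is propagated forward one degree at a time.
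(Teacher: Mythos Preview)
Your proof is correct and follows essentially the same inductive strategy as the paper: set $h_0 = 0$, observe that $c_i - h_{i-1}b_i$ vanishes on $B_i'$ and is killed by $a_i$, then lift via projectivity to obtain $h_i$ with $h_i(B_i') = 0$. Your presentation is slightly more explicit about why $B_i''$ is projective and about the role of the hypothesis $B_0 = B_0'$, but the argument is the same.
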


\begin{proof}
Build the sequence of homotopies inductively, setting $h_0 = 0$. Let $i>0$ and assume $h_{i-1}$ has been defined. Notice that $a_i \circ (c_i - h_{i-1} \circ b_i)=0$ by the inductive hypothesis. By acyclicity of $A$, there exists $h_i : B_i \to A_{i+1}$ such that 
$$a_{i+1} \circ h_i = c_i - h_{i-1} \circ b_i.$$
Moreover, $(c_i - h_{i-1} \circ b_i)(B_i') = 0$ by the assumptions on $c$ and $b$, and induction. Thus $h_i$ may be chosen to satisfy the desired properties.
\end{proof}

\begin{theorem}[{\cite[Theorem 4.6]{l4res}}]\label{l4resdg}
Let $R$ be a commutative Noetherian ring and $M$ a length $4$ resolution of a cyclic $R$-module by finitely generated free $R$-modules with $\rank (M_4 ) =1$ and
$$M \cong \hom_R (M , M_4).$$
Then $M$ has the structure of a DG$\Gamma$ $R$-algebra which exhibits Poincar\'e duality.
\end{theorem}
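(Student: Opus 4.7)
The plan is to construct the DG$\Gamma$ product on $M$ from the top down, exploiting the fact that $M_i=0$ for $i>4$ drastically limits the data to be specified. Writing $\epsilon:M\to R/I$ for the augmentation, I would first observe that because $M\otimes_R M$ is a complex of projectives mapping in homology to $R/I$, standard comparison gives a chain map $m:M\otimes_R M\to M$ lifting the multiplication $(R/I)\otimes_R (R/I)\to R/I$; this makes $M$ into a DG $R$-algebra up to homotopy. Only the components $m_{1,1}$, $m_{1,2}$, $m_{1,3}$, and $m_{2,2}$ carry nontrivial information, since every other bidegree either involves $M_0=R$ (where the unit is forced) or lands in $M_i$ with $i>4$.

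Next I would use the hypothesis $M\cong\hom_R(M,M_4)$ to \emph{define} the top-degree products $m_{1,3}$ and $m_{2,2}$ as the maps corresponding to the self-duality isomorphisms. Explicitly, the isomorphism $M_1\xrightarrow{\sim}\hom_R(M_3,M_4)$ is the same data as a perfect pairing $m_{1,3}:M_1\otimes_R M_3\to M_4$, and similarly for $m_{2,2}$ (with the self-pairing arranged to be symmetric, as required by graded commutativity in even total degree). With these canonical top-degree products fixed, I would construct $m_{1,1}$ and $m_{1,2}$ by lifting along the resolution via Lemma \ref{htpmod}, arranging that the Leibniz rule holds strictly; Lemma \ref{htpmod} in particular lets me prescribe that distinguished subcomplexes of $M\otimes_R M$ coming from degeneracies or symmetrization be sent to zero.

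The verification of graded commutativity, associativity, and the divided-power axioms proceeds one axiom at a time. Graded commutativity of $m_{1,3}$ and $m_{2,2}$ follows because the Poincar\'e pairing is self-adjoint; for the lower components it holds up to a cycle which becomes a boundary after further adjustment of the lift. The heart of the argument is associativity: the associator $\alpha(x,y,z)=m(m(x,y),z)-(-1)^{|x|}m(x,m(y,z))$ is a cocycle of total degree at most $4$, and one must show it is a boundary so that $m_{1,1}$ and $m_{1,2}$ can be modified to kill it. The self-duality translates the obstruction modulo boundaries into a symmetry condition on the canonical top-degree pairings, which is guaranteed by their construction.

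Finally, the DG$\Gamma$ structure demands a divided-power operation on $M_2$; since $M_4\cong R$ only accommodates the squaring $x\mapsto x^{(2)}$, all higher powers vanish automatically, and the problem reduces to producing a refinement $D_2(M_2)\to M_4$ of $m_{2,2}$ compatible with the polarization identity $(x+y)^{(2)}=x^{(2)}+xy+y^{(2)}$. The main obstacle throughout this program is executing the successive modifications in a consistent order, since curing one axiom risks spoiling another; the reason the construction succeeds in length $4$ and not in length $\geq 5$ is that all relevant obstruction spaces live in bidegrees $\leq 4$ and become boundaries once the self-duality pairing is brought to bear. This is precisely the content of Kustin's Theorem $4.6$ in \cite{l4res}, and I would reproduce its detailed bookkeeping to complete the argument.
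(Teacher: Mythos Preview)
The paper does not prove this theorem at all: it is stated with a citation to \cite[Theorem~4.6]{l4res} and used as a black box (see the Remark immediately following it and the opening of Section~\ref{itsdg}). There is therefore no in-paper proof to compare your proposal against.

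As for the proposal itself, the outline is the standard one and matches in spirit what Kustin does in \cite{l4res}: build products via comparison maps, use the self-duality to pin down the top-degree pairings, and then repair associativity by modifying the lower-degree products. Two points are worth flagging. First, your invocation of Lemma~\ref{htpmod} is misplaced: that lemma produces nullhomotopies vanishing on prescribed summands, which is used in the present paper to control the maps $X$ and $X^t$ in Section~\ref{betamap}; it is not the mechanism for lifting multiplication maps. Second, and more seriously, the sentence ``the self-duality translates the obstruction modulo boundaries into a symmetry condition on the canonical top-degree pairings, which is guaranteed by their construction'' is where the entire difficulty lives, and you have not said anything concrete. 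In length~$4$ the associator for three degree-$1$ elements lands in $M_3$, not $M_4$, so the Poincar\'e pairing does not directly control it; one must instead analyze the induced map $\bigwedge^3 M_1\to M_3$ and show it can be absorbed into a redefinition of $m_{1,2}$ without breaking the Leibniz rule or commutativity already established. Kustin's argument in \cite{l4res} carries this out carefully (and characteristic-free, which is the point of that paper over \cite{char2} and \cite{char3}); your sketch does not yet engage with that step.
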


\begin{remark}
In particular, Theorem \ref{l4resdg} tells us that in the case that $R$ is local or standard graded, the (homogeneous) minimal free resolution of a grade $4$ Gorenstein ideal admits the structure of an associative DG$\Gamma$ $R$-algebra exhibiting Poincar\'e duality.
\end{remark}

The following setup will be used for the rest of the paper. Notice that $R$ need not be local.

\begin{setup}\label{setup2}
Let $I$ be a grade $4$ Gorenstein ideal. Let $(M,m) : 0 \to M_4 \to \cdots \to M_1 \to M_0 = R$ be a length $4$ resolution of $R/I$. Assume that $M$ is also a DG$\Gamma$ $R$-algebra exhibiting Poincar\'e duality.

Let $(K,k)$ denote the Koszul complex on a length $3$ regular sequence. Let $R/\mfa$ denote the complete intersection that $K$ is resolving; assume $\mfa \subseteq I$. Moreover, assume that $M_1 = M_{1,1} \oplus M_{1,2}$, where $\rank (M_{1,1}) = 3$ and $m_1(M_{1,1}) = \mfa$. 

Define $\alpha_0 : K_0 = R \to M_0 = R$ as the identity. Define $\alpha_1 : K_1 \to M_1$ by the condition that the composition $K_1 \to M_1 \to M_{1,1}$ makes the following diagram commute:
$$\xymatrix{K_1 \ar[r]^-{k_1} \ar[d] & \mfa \\
 M_{1,1} \ar[ur]_{m_1} & \\}$$
and that $K_1 \to M_1 \to M_{1,2}$ is the $0$ map. Define $\alpha : K \to M$ as the map which extends $\alpha_1$ and $\alpha_0$ as described above. Given this data, define $\beta : M \to K[-1]$ via
$$\big[ \beta_i (\theta_i ) \cdot \phi_{4-i} \big]_{K} =  (-1)^{i+1} \big[ \theta_i \alpha_{4-i} (\phi_{4-i}) \big]_{M}$$
\end{setup}

\begin{remark}
The assumption that $\alpha_1 (K_1)$ is a direct summand of $M_1$ means that the resolution of $M$ is not necessarily minimal in the case that $R$ is local. For our purposes, this will not be significant since Proposition \ref{bfstdl} applies regardless of the minimality of $M$.
\end{remark}

\section{The Big From Small Construction}\label{bigsmall}

In this section we give a brief review of the big from small construction of \cite{bfs} as applied to the data Setup \ref{setup2}. For convenience, we recall the relationship between the complex of Definition \ref{bfs} and \emph{tightly double linked} Gorenstein ideals. Proposition \ref{bfstdl} is stated here for reference and will not be directly employed in later sections.

\begin{definition}\label{bfs}
Adopt notation and hypotheses as in Setup \ref{setup2}; let $r \in R$. The \emph{big from small construction} applied to the data $(\alpha , r)$ yields the complex
$$(F ( \alpha , r), f): 0 \to F_4 \to F_3 \to F_2 \to F_1 \to F_0$$
with $F_0 = R$, $F_1 = K_1 \oplus M_1$, $F_2 = K_2 \oplus M_2 \oplus K_1$, $F_3 = M_3 \oplus K_2$, $F_4 = M_4$, and
\begingroup
\allowdisplaybreaks
\begin{align*}
    f_1 &= (m_1 \ \beta_1 + r k_1 ) \\
    f_2 = \begin{pmatrix}
    m_2 & \beta_2 & r \\
    0 & - k_2 & - \alpha_1 \\
    \end{pmatrix}, &\qquad \qquad f_3 = \begin{pmatrix} \beta_3 & -r \\
    - k_3 & - \alpha_2 \\
    0 & m_2 \\
    \end{pmatrix} \\
    f_4 &= \begin{pmatrix} \alpha_3 \beta_4 -r m_4 \\
    -k_3 \beta_4 \\
    \end{pmatrix} \\
\end{align*}
\endgroup
\end{definition}

\begin{definition}\label{tdl}
Let $I$ and $I'$ be two grade $g$ Gorenstein ideals in a commutative Noetherian ring $R$. Suppose $\mfa$ is a grade $g-1$ complete intersection. There is a \emph{tight double link} between $I$ and $I'$ over $K$ if there exists an ACI $J = (\mfa , y , y')$ with $(\mfa , y)$ and $(\mfa , y')$ both complete intersections, and
$$(\mfa , y ): I = (\mfa, y' ) : I$$
\end{definition}
The relationship between Definitions \ref{bfs} and \ref{tdl} is given by the following Proposition:

\begin{prop}[{\cite[Proposition 4.3]{minres}}]\label{bfstdl}
Let $\mfa \subseteq I$ be ideals in a commutative Noetherian ring $R$ with $\mfa$ a grade $3$ complete intersection and $I$ a grade $4$ Gorenstein ideal. Let $K$ be a length $3$ Koszul complex resolving $R/\mfa$ and $M$ a length $4$ Poincar\'e DG $R$-algebra resolving $R/I$, with $\alpha : K \to M$ the induced comparison map.

If there is a tight double link between $I$ and a grade $4$ Gorenstein ideal $I'$ over $\mfa$, then there exists $r \in R$ such that $F ( \alpha , r)$ resolves $R/I'$.
\end{prop}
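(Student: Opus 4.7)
The plan is to identify the correct scalar $r \in R$ from the tight double link data and verify that $F(\alpha, r)$ is a free resolution of $R/I'$, which amounts to checking it is a complex, computing $H_0 = R/I'$, and establishing acyclicity. The element $r$ should be extracted from the linkage hypothesis: since $(\mfa, y)$ and $(\mfa, y')$ are complete intersections with $\mfa \subseteq I$ of grade $3$ and the linkage equality $(\mfa, y) : I = (\mfa, y') : I$ holds, one can compare $y'$ to $y$ modulo $\mfa$ against a distinguished generator of $I$ (coming from the Poincaré-duality splitting $M_1 = M_{1,1} \oplus M_{1,2}$ in Setup \ref{setup2}). The resulting scalar is $r$; equivalently, $r$ can be characterized as the unique scalar making the composite $\alpha_3 \beta_4 - r m_4 : M_4 \to M_3$ factor through the appropriate earlier differential, which ties $r$ to how $\alpha$ and $\beta$ interact under Poincaré duality.

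With $r$ fixed, the complex identities $f_i \circ f_{i+1} = 0$ for $i = 1, 2, 3$ reduce to combining (i) $m \circ m = 0$ and $k \circ k = 0$; (ii) the chain-map property $m_i \alpha_i = \alpha_{i-1} k_i$; (iii) the defining Poincaré-duality identity $[\beta_i(\theta) \cdot \phi]_K = (-1)^{i+1}[\theta \cdot \alpha_{4-i}(\phi)]_M$; and (iv) the cancellation of $r$-terms guaranteed by the choice above. For acyclicity one can invoke the Buchsbaum--Eisenbud criterion: the ranks of the $f_i$ are determined by the ranks of $M$ and $K$, and the depth condition on the maximal-minor ideals can be checked by localizing at primes of codimension $< 4$, where the interaction simplifies because both $M$ and $K$ become split exact in appropriate degrees. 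A cleaner alternative is to view $F(\alpha, r)$ as an $r$-perturbation of the mapping cone of $\beta : M \to K[-1]$ and deduce acyclicity from the long exact sequence of the cone together with the acyclicity of $M$ and $K$. Finally, $H_0(F(\alpha, r)) = R/\im f_1$, where $\im f_1$ is generated by $m_1(M_1) = I$ together with $(\beta_1 + r k_1)(K_1)$; the tight double link identity is precisely what forces this sum to equal $I'$.

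The main obstacle I anticipate is the acyclicity step: the three-summand structure of $F_2 = K_2 \oplus M_2 \oplus K_1$ together with the simultaneous presence of $\alpha$, $\beta$, and $r$ makes the rank and depth bookkeeping delicate, and while the mapping-cone viewpoint conceptually reduces to acyclicity of $M$ and $K$, the nonzero perturbation by $r$ must be shown not to introduce new homology in positive degrees — this is where the careful choice of $r$ from the tight double link pays off and where most of the work concentrates.
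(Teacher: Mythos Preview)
The paper does not give its own proof of this proposition: it is quoted verbatim from \cite[Proposition~4.3]{minres} and the surrounding text explicitly says ``Proposition~\ref{bfstdl} is stated here for reference and will not be directly employed in later sections.'' There is therefore no argument in the present paper to compare your proposal against; the actual proof lives in Kustin's earlier paper.

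That said, a brief comment on the sketch itself. Your overall outline (identify $r$ from the linkage data, check $F(\alpha,r)$ is a complex, compute $H_0$, then prove acyclicity via Buchsbaum--Eisenbud or a cone/perturbation argument) is the right shape and is close to how such results are proved in the linkage literature. One point deserves care: you assert that $\im f_1$ is generated by $m_1(M_1)=I$ together with the image of $(\beta_1+rk_1)$, which would force $I\subseteq I'$. Tightly double-linked Gorenstein ideals need not satisfy such a containment, so either the block description of $f_1$ needs to be read more carefully (the paper's ordering of summands in $F_1$ and the entries of $f_1$ are not entirely consistent as written), or the identification of $\im f_1$ with $I'$ requires a more delicate argument than simply taking the sum of the two column images. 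When you consult \cite{minres} you will see that pinning down $\im f_1$ correctly is exactly where the tight-double-link hypothesis enters, and it is not just a matter of adding generators to $I$.
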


\section{Using Tate-Like Complexes To Induce DG Structures}\label{betamap}

In this section, we develop the machinery and construct the pieces to be used in defining the multiplication on the complex of Definition \ref{bfs}. Many of the computations of this section are straightforward yet tedious verifications of desired properties. A brief overview of this section is as follows: first, Proposition \ref{morcx} provides a morphism of complexes from a Tate-like complex (see Definition \ref{def:tateDiff}) to the complex $M$. This morphism of complexes will be nullhomotopic by some nullhomotopy $h$, and one can furthermore employ Lemma \ref{htpmod} to ensure that $h$ may be chosen to satisfy an additional list of properties (see Corollary \ref{h1}). Finally, we use the Poincar\'e duality exhibited by $M$ to define two additional maps $X$ and $X^t$; Proposition \ref{properties} enumerates the properties of $X$ and $X^t$ that will be needed in the proof of Theorem \ref{thm:BFSisDG}. 

We begin this section by examining additional properties of the maps $\alpha$ and $\beta$ as in Setup \ref{setup2}.

\begin{prop}
Adopt notation and hypotheses as in Setup \ref{setup2}. Then:
\begin{enumerate}
    \item $\beta \circ \alpha = 0$,
    \item $\beta_{i+j} (\alpha_i (\phi_i ) \theta_j) =  \phi_i \beta_j (\theta_j)$, and
    \item $\beta_4$ is an isomorphism.
\end{enumerate}
\end{prop}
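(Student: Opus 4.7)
The plan is to reduce each of (1), (2), (3) to an identity that can be verified after pairing with an arbitrary element of the appropriate $K$-degree and then invoking the perfect pairing $K_{i-1} \otimes_R K_{4-i} \to K_3$ coming from Poincar\'e duality on $K$. Throughout, I would take $\alpha: K \to M$ to be the DG algebra morphism extending the given $\alpha_0$ and $\alpha_1$; such a multiplicative extension exists because $K$ is the exterior algebra on $K_1$, while $M$ is graded-commutative with squares of odd-degree elements vanishing, and it is automatically a chain map via the Leibniz rule. The key input in what follows is the identity $\alpha_i(\phi)\alpha_j(\psi) = \alpha_{i+j}(\phi\psi)$.

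Part (3) is the shortest. Specialize the defining relation of $\beta$ at $i = 4$ with $\phi_0 = 1 \in K_0$, yielding
$$[\beta_4(\theta_4)]_K \;=\; -[\theta_4]_M.$$
Since $[-]_K : K_3 \to R$ and $[-]_M : M_4 \to R$ are the orientation isomorphisms of the two Poincar\'e duality algebras, this exhibits $\beta_4$ as the composition of one with the inverse of the other (up to sign), hence an isomorphism.

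For (1), apply the defining relation and then multiplicativity of $\alpha$:
$$[\beta_i(\alpha_i(\phi_i)) \cdot \psi]_K \;=\; (-1)^{i+1}[\alpha_i(\phi_i)\alpha_{4-i}(\psi)]_M \;=\; (-1)^{i+1}[\alpha_4(\phi_i\psi)]_M.$$
Since $K$ is a length $3$ Koszul complex, $K_4 = 0$, so $\phi_i \psi = 0$ and the pairing vanishes for every $\psi \in K_{4-i}$. The perfect pairing then forces $\beta_i(\alpha_i(\phi_i)) = 0$.

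Part (2) is the main obstacle, and is essentially a sign-tracking exercise. Starting from $[\beta_{i+j}(\alpha_i(\phi_i)\theta_j) \cdot \psi]_K$ with $\psi \in K_{4-i-j}$, my plan is: (i) apply the defining relation of $\beta$; (ii) use graded commutativity of $M$ to move $\alpha_i(\phi_i)$ past $\theta_j$, producing $(-1)^{ij}$; (iii) use multiplicativity of $\alpha$ to combine $\alpha_i(\phi_i)\alpha_{4-i-j}(\psi) = \alpha_{4-j}(\phi_i \psi)$; (iv) apply the defining relation of $\beta$ in reverse on the pair $(\theta_j, \phi_i\psi)$, introducing $\beta_j(\theta_j)$; and (v) use graded commutativity of $K$ to move $\beta_j(\theta_j) \in K_{j-1}$ past $\phi_i \in K_i$, producing $(-1)^{i(j-1)}$. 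The cumulative sign is $(-1)^{(i+j+1)+ij+(j+1)+i(j-1)} = (-1)^{2ij} = 1$, so the pairing equals $[\phi_i \beta_j(\theta_j) \cdot \psi]_K$, and perfect pairing concludes. The one subtlety worth flagging is that $\beta_j(\theta_j)$ has degree $j-1$, not $j$; using this correct parity in step (v) is precisely what makes the four signs cancel.
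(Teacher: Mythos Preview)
Your proof is correct and follows essentially the same route as the paper: all three parts are established by pairing against an arbitrary element of the complementary $K$-degree, using multiplicativity of $\alpha$ together with $K_4=0$, and then invoking perfect pairing. The only cosmetic difference in part~(2) is that the paper begins with $\beta_{i+j}(\theta_j\alpha_i(\phi_i))$ and applies skew-commutativity at the end, whereas you start from $\beta_{i+j}(\alpha_i(\phi_i)\theta_j)$ and track the signs as you go; the computations are equivalent.
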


\begin{proof}
$(1)$: Let $\phi_i \in K_i$, $\phi_{4-i} \in K_{4-i}$. Then,
\begingroup\allowdisplaybreaks
\begin{align*}
    \big[ \beta_i (\alpha_i (\phi_i)) \phi_{4-i} \big]_K &= (-1)^{i+1} \big[ \alpha_i ( \phi_i ) \alpha_{4-i} (\phi_{4-i} ) \big]_M \\
    &= (-1)^{i+1} \big[ \alpha_4 (\phi_i \w \phi_{4-i} ) \big]_M \\
    &= 0, \\ 
\end{align*}
\endgroup
since $K_i = 0$ for $i >3$. \\
$(2)$: Let $\phi_{4-i-j} \in K_{4-i-j}$.
\begin{equation*}
    \begin{split}
        \big[ \beta_{i+j} \big( \theta_j \cdot \alpha_i (\phi_i ) \big) \cdot \phi_{4-i-j} \big]_K &= (-1)^{i+j+1} \big[ \theta_j \cdot \alpha_i (\phi_i ) \cdot \alpha_{4-i-j} (\phi_{4-i-j} ) \big]_M \\
        &= (-1)^{i+j+1} \big[ \theta_j \cdot \alpha_{4-j} (\phi_i \w \phi_{4-i-j} ) \big]_M \\
        &= (-1)^i \big[ \big( \beta_j (\theta_j ) \cdot \phi_i \big) \cdot \phi_{4-i-j} \big]_K, \\
    \end{split}
\end{equation*}
so that $\beta_{i+j} (\theta_j \alpha_i (\phi_i) ) = (-1)^i \beta_j( \theta_j ) \phi_i$. Using skew-commutativity, the result follows. \\
$(3)$: Notice that $\alpha_0$ is the identity map, which is an isomorphism. By Poincar\'e duality, $\beta_4$ must also be an isomorphism.
\end{proof}

\begin{definition}\label{def:tateDiff}
Let $(F_\bullet, d_\bullet)$ be a complex of length $n$ concentrated in nonnegative homological degrees. Given an integer $a \geq 0$, let 
$$N_a (F_i ) := \begin{cases} \bigwedge^a F_i & \textrm{if} \ i \ \textrm{is odd}, \\
D_a (F_i) & \textrm{if} \ i \ \textrm{is even}. \\
\end{cases}$$
There is an induced map $\partial_{a,i} : N_a (F_i) \to N_{a-1} (F_i)$ defined as the composition
\begingroup\allowdisplaybreaks
\begin{align*}
    N_a (F_i) & \xrightarrow{\Delta} F_i \otimes N_{a-1} (F_i) \\
    & \xrightarrow{d_i \otimes 1} F_{i-1} \otimes N_{a-1} (F_i), 
\end{align*}
\endgroup
where $\Delta$ denotes the appropriate comultiplication. Each $\partial_{a,i}$ induces maps
$$\partial_k^j : \bigoplus_{\substack{a_1 + \cdots + n a_n = k\\ 
a_0 + \cdots + a_n = j \\}} N_{a_0} (F_0) \otimes \cdots \otimes N_{a_n} (F_n) \to \bigoplus_{\substack{a_1 + \cdots + na_n = k-1 \\
a_0 + \cdots + a_n = j \\}} N_{a_0} (F_0) \otimes \cdots \otimes N_{a_n} (F_n)$$
by employing a graded Leibniz rule on each tensor. The map $\partial_k^j$ will be referred to as \emph{the induced Tate differential}, and the complex induced by the collection of all $\partial_k^j$ will be referred to as a \emph{Tate-like complex}.
\end{definition}
In the following Proposition, the complex $B$ is an example of a Tate-like complex. 

\begin{prop}\label{morcx}
Adopt notation and hypotheses as in Setup \ref{setup2}. Let $$B: B_5 \to B_4 \to B_3 \to B_2 \to B_1 \to B_0$$ be the complex with
$$B_0 = M_0, \quad B_1 = M_1 ,\quad B_2 = (\bigwedge^2 M_1) \oplus M_2$$
$$B_3 = \bigwedge^3 M_1 \oplus (M_1 \otimes M_2) \oplus M_3, \quad B_4 = (\bigwedge^2 M_1 \otimes M_2) \oplus D_2 M_2 \oplus (M_1 \otimes M_3)$$
$$B_5 = (M_1 \otimes D_2 M_2) \oplus( \bigwedge^2 M_1 \otimes M_3)$$
and the differential $d$ being the induced Tate differential (that is, just use the graded Leibniz rule). Define a map $c : B \to K[-2]$ via:
$$c_0 = c_1 = c_2 = 0,$$
\begingroup
\allowdisplaybreaks
\begin{align*}
        c_3 \Bigg( \begin{pmatrix} \theta_1 \w \theta_1'\w \theta_1'' \\
        \theta_1''' \otimes \theta_2 \\
        \theta_3 \\ \end{pmatrix} 
        \Bigg) &= \beta_2 (\theta_1 \w \theta_1' ) \beta_1 (\theta_1'') - \beta_2 (\theta_1 \w \theta_1'' ) \beta_1 (\theta_1') \\
        & \qquad+ \beta_2 (\theta_1' \w \theta_1'' ) \beta_1 (\theta_1), \\
\end{align*}
\endgroup
\begingroup
\allowdisplaybreaks
\begin{align*}
        c_4 \Bigg( \begin{pmatrix} \theta_1 \w \theta_1' \otimes \theta_2' \\
         \theta_2^{(2)} \\
        \theta_1''' \otimes \theta_3 \\ \end{pmatrix} 
        \Bigg) &= \beta_1 (\theta_1') \beta_3 (\theta_1 \theta_2') -\beta_1 ( \theta_1) \beta_3 (\theta_1'\theta_2' ) \\
        &\qquad - \beta_2 (\theta_1 \w \theta_1' ) \beta_2 (\theta_2'), \\
    \end{align*}
\endgroup
\begingroup
\allowdisplaybreaks
\begin{align*}
        c_5 \Bigg( \begin{pmatrix} \theta_1 \otimes \theta_2^{(2)} \\
        \theta_1' \w \theta_1'' \otimes \theta_3 \\ \end{pmatrix} 
        \Bigg) &= \beta_1 (\theta_1 ) \beta_4 (\theta_2^{(2)}) - \beta_3 (\theta_1 \theta_2 ) \beta_2 (\theta_2) \\
        &\qquad -\beta_1 (\theta_1'' ) \beta_4 ( \theta_1' \theta_3 ) + \beta_1 (\theta_1' ) \beta_4 (\theta_1'' \theta_3 ) \\
        &\qquad- \beta_2 (\theta_1' \w \theta_1'' ) \beta_3 (\theta_3). 
    \end{align*}
\endgroup
Then $c$ is a morphism of complexes.
\end{prop}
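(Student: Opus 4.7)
The plan is to verify the chain-map identity $k_{i-2}\circ c_i = c_{i-1}\circ d_i$ directly for $i = 3, 4, 5$; the cases $i \leq 2$ are vacuous since $c_0 = c_1 = c_2 = 0$ and the Tate differential on $B_1, B_2$ lands in $\ker c$ automatically. The main workhorse, used pervasively throughout, is the auxiliary fact that $\beta \colon M \to K[-1]$ is itself a morphism of complexes, i.e.\ $k_{i-1}\beta_i = \beta_{i-1}m_i$. This is not formally stated in the preceding proposition but follows from the defining Poincar\'e duality formula for $\beta$ by pairing $k_{i-1}\beta_i(\theta)$ and $\beta_{i-1}m_i(\theta)$ against an arbitrary $\phi \in K_{5-i}$ and applying, in turn, the Leibniz rule on $K$ (together with $K_4 = 0$), the chain-map property of $\alpha$, and the Leibniz rule on $M$ (together with $M_5 = 0$). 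Combined with the Leibniz rule on $M$ and the fact that $\beta_1$-values lie in $K_0 = R$ and hence commute past other $\beta$-factors, these identities form the complete toolkit.

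For $i = 3$ the identity $k_1 c_3 = 0$ follows by applying $k_1$ to each of the three terms of $c_3$ using $k_1\beta_2 = \beta_1 m_2$, expanding $m_2(\theta_a \theta_b)$ via Leibniz as $m_1(\theta_a)\theta_b - m_1(\theta_b)\theta_a$, and observing that the resulting six summands pair up into a Jacobi-type alternating sum that cancels identically. For $i = 4$ one first checks that $d_4$ sends the $D_2 M_2$ and $M_1 \otimes M_3$ summands of $B_4$ into $(M_1 \otimes M_2) \oplus M_3 \subset B_3$, which lies in $\ker c_3$; since $c_4$ also vanishes on these summands, both sides of the identity vanish there. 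On the remaining summand $\bigwedge^2 M_1 \otimes M_2$, only the wedge-part of $d_4(\theta_1\w\theta_1'\otimes\theta_2')$ survives $c_3$, yielding the single expression $c_3(\theta_1 \w \theta_1' \w m_2(\theta_2'))$. On the other side, expanding $k_2 c_4$ via $k_2\beta_3 = \beta_2 m_3$ and Leibniz on $M$ produces cross-terms carrying a factor of the form $m_1(\theta_i)\beta_1(\theta_j)\beta_2(\theta_2')$; these cancel in two pairs, leaving exactly the required expression.

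The case $i = 5$ is the main obstacle: both summands of $B_5$ contribute nontrivially to $c_5$, and the Tate differential on $D_2 M_2$ must be expanded via the divided-power rule $d(\theta_2^{(2)}) = m_2(\theta_2)\theta_2$. The approach is to expand $k_3 c_5$ using $k_3\beta_4 = \beta_3 m_4$ together with Leibniz on $K$, and to expand $c_4 d_5$ by applying the graded Leibniz rule on each tensor factor of $B_5$; then one groups terms on both sides according to which pair of $\beta$-factors each term contains. The surplus terms (those carrying an extra factor of $m_1$ applied to some element of $M_1$) cancel pairwise by invoking the chain-map identities $k_1\beta_2 = \beta_1 m_2$ and $k_2\beta_3 = \beta_2 m_3$ already used in the previous cases, while the remaining terms match after applying Leibniz on $M$ in the form $m_4(\theta_i \theta_{4-i}) = m_i(\theta_i)\theta_{4-i} + (-1)^i \theta_i m_{4-i}(\theta_{4-i})$. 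The principal source of difficulty is sign bookkeeping — signs arise simultaneously from the Tate differential, from Leibniz on $K$, and from the skew-commutativity of products of $\beta$-factors in $K$ — but once a consistent convention is fixed at the outset, only a bounded number of explicit term-matchings are required.
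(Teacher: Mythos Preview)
Your approach is essentially the paper's: a direct verification of $k_{i-2}c_i = c_{i-1}d_i$ for $i=3,4,5$, driven by the chain-map property of $\beta$ together with the Leibniz rules on $K$ and $M$. Your reductions at $i=3,4$ match the paper exactly, including the observation that $c_3$ and $c_4$ see only the $\bigwedge^3 M_1$ and $\bigwedge^2 M_1\otimes M_2$ summands, respectively.

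One precise point at $i=5$ deserves correction. The surplus $m_1$-terms you flag do \emph{not} cancel by the chain-map identities $k_1\beta_2=\beta_1 m_2$, $k_2\beta_3=\beta_2 m_3$; those identities are what you use upstream to convert $k_3c_5$ into $\beta$-$m$ expressions in the first place. The actual cancellations among the $m_1$-terms come from plain $R$-linearity of $\beta$ (e.g.\ $\beta_1(\theta_1'')\beta_3(m_1(\theta_1')\theta_3)=\beta_1(m_1(\theta_1')\theta_1'')\beta_3(\theta_3)$ pairs off with the term arising from expanding $\beta_1 m_2(\theta_1'\theta_1'')$). Moreover, one of these terms does not cancel against anything at all: the expansion of $\beta_2 m_3(\theta_1\theta_2)\beta_2(\theta_2)$ produces $m_1(\theta_1)\beta_2(\theta_2)\beta_2(\theta_2)$, which vanishes outright because $\beta_2(\theta_2)\in K_1$ squares to zero. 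This odd-degree vanishing is a necessary ingredient the proposal does not mention; without it the $i=5$ identity does not close.
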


\begin{proof}
One must verify that all of the appropriate maps commute. The first nontrivial place to check is $c_3 : B_3 \to K_1$. 
\begingroup
\allowdisplaybreaks
\begin{align*}
        k_1 \circ c_3 \Bigg( \begin{pmatrix} \theta_1 \w \theta_1'\w \theta_1'' \\
        \theta_1''' \otimes \theta_2 \\
        \theta_3 \\ \end{pmatrix} 
        \Bigg) &= \beta_1 ( m_2 (\theta_1 \w \theta_1') ) \beta_1 (\theta_1'') - \beta_1 (m_2 (\theta_1 \w \theta_1'') ) \beta_1 (\theta_1') \\
        & \qquad+ \beta_1 (m_2 (\theta_1' \w \theta_1'') ) \beta_1 (\theta_1) \\
        &= \beta_1 ( m_1 (\theta_1) \theta_1' ) \beta_1 (\theta_1'')-  \beta_1 ( \theta_1 m_1( \theta_1') ) \beta_1 (\theta_1'') \\
        &\qquad - \beta_1 (m_1 (\theta_1) \theta_1'' ) \beta_1 (\theta_1') + \beta_1 ( \theta_1 m_1( \theta_1'') ) \beta_1 (\theta_1') \\
        &\qquad + \beta_1 (m_1 (\theta_1' ) \theta_1'' ) \beta_1 (\theta_1) - \beta_1 ( \theta_1' m_1( \theta_1'') ) \beta_1 (\theta_1). 
    \end{align*}
\endgroup
Since $\beta_i$ is a map of $R$-modules, one finds that $\beta_1 ( m_1 (\theta_1) \theta_1' ) \beta_1 (\theta_1'') = \beta_1 (  \theta_1' ) \beta_1 (m_1(\theta_1) \theta_1'')$, so this term cancels with the first term of the second line. Similarly for the other terms, so this composition is $0$ as needed. Next:
\begingroup
\allowdisplaybreaks
\begin{align*}
        k_2 \circ c_4 \Bigg( \begin{pmatrix} \theta_1 \w \theta_1' \otimes \theta_2' \\
         \theta_2^{(2)} \\
        \theta_1''' \otimes \theta_3 \\ \end{pmatrix} 
        \Bigg) &= \beta_1 (\theta_1') \beta_2 (m_3(\theta_1 \theta_2')) -\beta_1 ( \theta_1) \beta_2 (m_3(\theta_1'\theta_2') ) \\
        &\qquad - \beta_1 (m_2(\theta_1 \w \theta_1') ) \beta_2 (\theta_2')+\beta_2 (\theta_1 \w \theta_1' ) \beta_1 (m_2(\theta_2')) \\
        &= \beta_1 (\theta_1') \beta_2 (m_1 (\theta_1) \theta_2') - \beta_1 (\theta_1') \beta_2 (\theta_1 m_2( \theta_2')) \\
        &\qquad -\beta_1 ( \theta_1) \beta_2 (m_1(\theta_1' )\theta_2' ) + \beta_1 ( \theta_1) \beta_2 (\theta_1'm_2( \theta_2') ) \\
        &\qquad - \beta_1 (m_1(\theta_1) \w \theta_1' ) \beta_2 (\theta_2') + \beta_1 (\theta_1 \w m_1(\theta_1') ) \beta_2 (\theta_2') \\
        &\qquad \beta_2 (\theta_1 \w \theta_1' ) \beta_1 (m_2(\theta_2')) \\
        &= \beta_2 ( \theta_1 \w \theta_1 ' ) \beta_1 (m_2 (\theta_2' ) )  + \beta_1 (\theta_1) \beta_2 (\theta_1' \w m_2 (\theta_2') \\
        &\qquad -\beta_1 (\theta_1') \beta_2 (\theta_1' \w m_2 (\theta_2') ) \\
        &= c_3 \Bigg( \begin{pmatrix} \theta_1 \w \theta_1' \w m_2(\theta_2') \\
        m_2(\theta_2) \otimes \theta_2 + m_2 (\theta_1 \w \theta_1') \otimes \theta_2' -\theta_1'' \otimes m_3(\theta_3) \\
        m_1 (\theta_1'') \theta_3  \\
        \end{pmatrix} \Bigg) \\
        &= c_3 \circ d \Bigg( \begin{pmatrix} \theta_1 \w \theta_1' \otimes \theta_2' \\
         \theta_2^{(2)} \\
        \theta_1''' \otimes \theta_3 \\ \end{pmatrix} 
        \Bigg). 
    \end{align*}
\endgroup
And, finally:
\begingroup
\allowdisplaybreaks
\begin{align*}
        k_3 \circ c_5 \Bigg( \begin{pmatrix} \theta_1 \otimes \theta_2^{(2)} \\
        \theta_1' \ \theta_1'' \otimes \theta_3 \\ \end{pmatrix} 
        \Bigg) &= \beta_1 (\theta_1 ) \beta_3 (m_4 (\theta_2^{(2)})) - \beta_2 (m_3(\theta_1 \theta_2 ) ) \beta_2 (\theta_2) \\
        &\qquad -\beta_3 (\theta_1 \theta_2 ) \beta_1 (m_2 (\theta_2) ) \\
        &\qquad -\beta_1 (\theta_1'' ) \beta_3 ( m_4(\theta_1' \theta_3 ) ) + \beta_1 (\theta_1' ) \beta_3 (m_4(\theta_1'' \theta_3 ) ) \\
        &\qquad- \beta_1 ( m_2(\theta_1' \w \theta_1'') ) \beta_3 (\theta_3) + \beta_2 (\theta_1' \w \theta_1'' ) \beta_2 (m_3 (\theta_3 ) ) \\
        &= \beta_1 (\theta_1 ) \beta_3 (m_2 (\theta_2) \theta_2) \\
        &\qquad - \beta_2 (m_1(\theta_1) \theta_2  ) \beta_2 (\theta_2) + \beta_2 (\theta_1 m_2(\theta_2 ) ) \beta_2 (\theta_2) \\
        &\qquad -\beta_3 (\theta_1 \theta_2 ) \beta_1 (m_2 (\theta_2) ) \\
        &\qquad -\beta_1 (\theta_1'' ) \beta_3 ( m_1(\theta_1') \theta_3 ) + \beta_1 (\theta_1'' ) \beta_3 ( \theta_1' m_3( \theta_3 ) ) \\
        &\qquad +  \beta_1 (\theta_1' ) \beta_3 (m_1(\theta_1'') \theta_3  ) -  \beta_1 (\theta_1' ) \beta_3 (\theta_1'' m_3( \theta_3 ) ) \\
        &\qquad +\beta_1 ( m_1(\theta_1')  \theta_1'' ) \beta_3 (\theta_3) - \beta_1 ( \theta_1' m_1( \theta_1'') ) \beta_3 (\theta_3) \\
        &\qquad +\beta_2 (\theta_1' \w \theta_1'' ) \beta_2 (m_3 (\theta_3 ) ). 
     \end{align*}
\endgroup
Notice that $\beta_2 (m_1(\theta_1) \theta_2  ) \beta_2 (\theta_2) = m_1(\theta_1) \beta_2 (\theta_2) \beta_2 (\theta_2) = 0$, since $\beta_2 (\theta_2) \in K_1$. Moreover, $\beta_1 (\theta_1'' ) \beta_3 ( m_1(\theta_1') \theta_3 ) = \beta_1 ( m_1 (\theta_1') \theta_1'' ) \beta_3 (  \theta_3 )$, so this term cancels with the first term on the second line from the bottom. The same holds for $\beta_1 ( m_1(\theta_1')  \theta_1'' ) \beta_3 (\theta_3)$. Thus one is left with:
\begingroup
\allowdisplaybreaks
\begin{align*}
        &= \beta_1 (\theta_1 ) \beta_3 (m_2 (\theta_2) \theta_2) + \beta_2 (\theta_1 m_2(\theta_2 ) ) \beta_2 (\theta_2) \\
        &\qquad - \beta_3 (\theta_1 \theta_2 ) \beta_1 (m_2 (\theta_2) ) +\beta_2 (\theta_1' \w \theta_1'' ) \beta_2 (m_3 (\theta_3 ) ) \\
        &\qquad -\beta_1 (\theta_1' ) \beta_3 (\theta_1'' m_3( \theta_3 ) ) +\beta_1 (\theta_1'' ) \beta_3 ( \theta_1' m_3( \theta_3 ) ) \\
        &= c_4 \Bigg( \begin{pmatrix} -\theta_1 \w m_2 (\theta_2) \otimes \theta_2 + \theta_1' \w \theta_1'' \otimes m_3(\theta_3)  \\
        m_1 (\theta_1) \theta_2^{(2)} \\
        m_2(\theta_1' \w \theta_1'') \otimes \theta_3 \\ \end{pmatrix} 
        \Bigg) \\
        &= c_4 \circ d  \Bigg( \begin{pmatrix} \theta_1 \otimes \theta_2^{(2)} \\
        \theta_1' \ \theta_1'' \otimes \theta_3 \\ \end{pmatrix} 
        \Bigg). 
    \end{align*}
\endgroup
This shows that the map $c$ is a morphism of complexes.
\end{proof}

\begin{prop}\label{cprop}
Adopt notation and hypotheses as in Proposition \ref{morcx}. Then the map $c_4$ satisfies:
$$c_4 \Big( M_1 \w \alpha_1 (K_1) \otimes M_2 \Big) = 0,$$
$$c_4 \Big( \bigwedge^2 M_1 \otimes  \alpha_1(K_2) \Big) = 0,$$
$$c_4 \big( \theta_1 \w \theta_1' \otimes \alpha_1 (\phi_1)\cdot \theta_1 \big) = 0,$$
$$c_4 \Big( \theta_1 \w \theta_1' \otimes \alpha_1 (\phi_1) \theta_1'' + \theta_1'' \w \theta_1' \otimes \alpha_1 (\phi_1) \theta_1 \Big) = 0.$$
Additionally, the map $c_3$ satisfies:
$$c_3 ( \bigwedge^2 M_1 \w \alpha_1 (K_1) ) = 0.$$
\end{prop}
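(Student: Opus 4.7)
The strategy is a direct computation for each of the five identities: substitute the relevant $\alpha$-image into the defining formulas for $c_3$ and $c_4$ given in Proposition~\ref{morcx}, then simplify using the two tools from the preceding proposition, namely the vanishing $\beta \circ \alpha = 0$ (which annihilates any term where $\beta_i$ is applied directly to an element of $\alpha_i(K_i)$) and the module-type identity $\beta_{i+j}(\alpha_i(\phi_i)\theta_j) = \phi_i \beta_j(\theta_j)$ (which extracts $\phi_i$ as a left factor in $K$ whenever $\beta$ meets a product containing an $\alpha$-image). Any residual terms then cancel pairwise using skew-commutativity in $K$, together with the observation that $\beta_1(M_1) \subseteq K_0 = R$ commutes past every element of $K$.

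Concretely, for the first identity I set $\theta_1' = \alpha_1(\phi_1)$ in the formula for $c_4$; the first term dies by $\beta_1 \alpha_1 = 0$, and the module identity rewrites each of the remaining two as $\pm \phi_1 \beta_1(\theta_1)\beta_2(\theta_2')$, which cancel. The second identity is analogous with $\theta_2' = \alpha_2(\phi_2)$: the third term dies by $\beta_2 \alpha_2 = 0$ and the other two reduce to opposite multiples of $\phi_2 \beta_1(\theta_1)\beta_1(\theta_1')$. The third identity uses the extra DG-algebra input that $\theta_1^2 = 0$ in $M$ to kill the first term after substituting $\theta_2' = \alpha_1(\phi_1)\theta_1$; the remaining two terms both collapse to opposite scalar multiples of $\phi_1 \beta_1(\theta_1)\beta_2(\theta_1 \theta_1')$ via the module identity. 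For the fourth identity I expand both summands symbolically, apply the module identity to each $\alpha_1(\phi_1)$ appearance, and observe that the three pairs of resulting terms are interchanged (up to sign) by the swap $\theta_1 \leftrightarrow \theta_1''$ combined with $\beta_2(\theta \wedge \theta') = -\beta_2(\theta' \wedge \theta)$, yielding term-by-term cancellation. Finally, for the identity on $c_3$, substituting $\theta_1'' = \alpha_1(\phi_1)$ kills the first term by $\beta_1\alpha_1=0$, and the remaining two become $\pm \phi_1 \beta_1(\theta_1)\beta_1(\theta_1')$, which cancel since the two $\beta_1$-factors are scalars.

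The only genuine obstacle is sign bookkeeping: the identity $\beta_{i+j}(\alpha_i(\phi_i)\theta_j) = \phi_i \beta_j(\theta_j)$ must be combined with graded skew-commutativity both in $M$ (to position the $\alpha$-image adjacent to the $\beta$) and in $K$ (to reassemble the output into matching form). Once the signs are tracked carefully using the degrees $|\beta_1(\theta_1)| = 0$, $|\beta_2(\theta_1 \wedge \theta_1')| = 1$, and $|\phi_1| = 1$, every identity reduces to an equality of two opposite scalar multiples of a single product in $K$, with no conceptual difficulty beyond the one-time appeal to $\theta_1^2 = 0$ used for identity~(3).
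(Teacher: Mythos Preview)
Your proposal is correct and follows essentially the same approach as the paper: direct substitution of the relevant $\alpha$-image into the explicit formulas for $c_3$ and $c_4$, followed by simplification via $\beta\circ\alpha=0$ and the module identity $\beta_{i+j}(\alpha_i(\phi_i)\theta_j)=\phi_i\beta_j(\theta_j)$, with the residual terms cancelling by skew-commutativity in $K$. The only cosmetic difference is that the paper proves the fourth identity first and obtains the third by specializing $\theta_1''=\theta_1$ in an intermediate step, whereas you handle the third identity directly via $\theta_1^2=0$; the underlying cancellation is the same.
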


\begin{proof}
For the first formula,
\begingroup
\allowdisplaybreaks
\begin{align*}
        c_4 (\theta_1 \w \alpha_1( \phi_1 ) \otimes \theta_2 ) &= \beta_1 (\alpha_1 (\phi_1)) \beta_3 (\theta_1 \theta_2) -\beta_1 ( \theta_1) \beta_3 (\alpha_1(\phi_1) \theta_2 ) \\
        &\qquad - \beta_2 (\theta_1 \w \alpha_1 ( \phi_1) ) \beta_2 (\theta_2). 
   \end{align*}
   \endgroup
Since $\beta \circ \alpha = 0$, the first term vanishes. For the second terms,
$$\beta_3 ( \alpha_1 (\phi_1) \theta_2 ) = \phi_1 \beta_2 (\theta_2),$$
$$\beta_2 (\theta_1 \w \alpha_1 (\phi_1) ) = -\beta_1 (\theta_1 ) \phi_1,$$
so that these terms cancel and we obtain $0$. In the next case,
\begingroup
\allowdisplaybreaks
\begin{align*}
        c_4 (\theta_1 \w \theta_1' \otimes \alpha_2 (\phi_2) ) &= \beta_1 ( \theta_1') \beta_3 (\theta_1 \alpha_2 (\phi_2)) -\beta_1 ( \theta_1) \beta_3 ( \theta_1' \alpha_2 (\phi_2) ) \\
        &\qquad - \beta_2 (\theta_1 \w  \theta_1' ) \beta_2 (\alpha_2 (\phi_2)). 
    \end{align*}
   \endgroup
In this case, the last term is $0$. For the first two terms,
$$\beta_3 (\theta_1 \alpha_2 (\phi_2)) = \beta_1 (\theta_1 ) \phi_2,$$
$$\beta_3 ( \theta_1' \alpha_2 (\phi_2) ) = \beta_1 (\theta_1') \phi_2,$$
so these terms again cancel. For the last property of $c_4$:
\begingroup
\allowdisplaybreaks
\begin{align*}
        c_4 \Big( \theta_1 \w \theta_1' \otimes \alpha_1 (\phi_1) \theta_1'' \Big) &= \beta_1 ( \theta_1') \beta_3 (\theta_1 \alpha_1 (\phi_1) \theta_1'') -\beta_1 ( \theta_1) \beta_3 ( \theta_1' \alpha_1 (\phi_1) \theta_1'' ) \\
        &\qquad - \beta_2 (\theta_1 \w  \theta_1' ) \beta_2 (\alpha_1 (\phi_1) \theta_1'') \\
        &= -\beta_1 ( \theta_1') \beta_3 (\theta_1'' \alpha_1 (\phi_1) \theta_1) +\beta_1 ( \theta_1) \phi_1 \beta_2 (\theta_1' \theta_1'' ) \\
        &\qquad - \beta_2 (\theta_1 \w  \theta_1' ) \phi_1 \beta_1 ( \theta_1'') \\
        &= - \beta_1 ( \theta_1') \beta_3 (\theta_1'' \alpha_1 (\phi_1) \theta_1) + \beta_2 (\theta_1'' \w \theta_1' ) \beta_2 (\alpha_1 (\phi_1) \theta_1 ) \\
        &\qquad + \beta_1 (\theta_1'') \beta_3 (\theta_1 \alpha_1 (\phi_1 ) \theta_1') \\
        &= -c_4 \Big( \theta_1'' \w \theta_1' \otimes \alpha_1 (\phi_1) \theta_1 \Big).
    \end{align*}
   \endgroup
 Moreover, setting $\theta_1 = \theta_1''$ in the above, notice that the second equality in the above computation becomes $0$, so the penultimate property for $c_4$ also holds.
 
 For the $c_3$ property, one computes in similar fashion:
 \begingroup
 \allowdisplaybreaks
 \begin{align*}
     c_3( \theta_1 \w \theta_1 ' \w \alpha_1 ( \phi_1) ) &= \beta_2 ( \theta_1 \w \theta_1 ' ) \beta_1 ( \alpha_1 ( \phi_1)) - \beta_2 ( \theta_1 \w \alpha_1 ( \phi_1) ) \beta_1 ( \theta_1') \\
     &+ \beta_2 ( \theta_1' \w \alpha_1 ( \phi_1) ) \beta_1 ( \theta_1) \\
     &=-\beta_1 ( \theta_1) \phi_1 \beta_1 ( \theta_1') + \beta_1 ( \theta_1') \phi_1 \beta_1 ( \theta_1) \\
     &=0,
 \end{align*}
 \endgroup
 where in the above, we have used that $\beta \circ \alpha = 0$ and $\beta_{i+j} (\alpha_i (\phi_i ) \theta_j) =  \phi_i \beta_j (\theta_j)$. 
\end{proof}
The following Corollary provides the modified nullhomotopy used to define the maps $X$ and $X^t$ of Definition \ref{Xnew}.

\begin{cor}\label{h1}
Adopt notation and hypotheses as in Proposition \ref{cprop}. Then there exists a homotopy $h: B \to K[-1]$ with $c = kh + h d$. Moreover, $h$ may be chosen to satisfy the following:
\begin{enumerate}
    \item $h$ restricted to any summand of each $B_i$ with fewer than $3$ terms in the product is identically $0$.
    \item $h_3 \Big( \bigwedge^2 M_1 \w \alpha_1 (K_1) ) = 0$,
    \item $h_4 \Big( M_1 \w \alpha_1 (K_1) \otimes M_2 \Big) = 0$,
    \item $h_4 \Big( \bigwedge^2 M_1 \otimes  \alpha_1(K_2) \Big) = 0$,
    \item $h_4 (\theta_1 \w \theta_1' \otimes \alpha_1 (\phi_1)\cdot \theta_1) = 0$,
    \item $h_4 \Big( \theta_1 \w \theta_1' \otimes \alpha_1 (\phi_1) \theta_1'' + \theta_1'' \w \theta_1' \otimes \alpha_1 (\phi_1) \theta_1 \Big) = 0$.
\end{enumerate}
\end{cor}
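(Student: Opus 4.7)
The plan is to invoke Lemma \ref{htpmod} applied to the chain map $c : B \to K[-2]$ produced by Proposition \ref{morcx}, with a decomposition $B_i = B_i' \oplus B_i''$ engineered so that each $B_i'$ contains precisely the subspaces on which $h_i$ is required to vanish. Explicitly, take $B_i' = B_i$ for $i \leq 2$; let
\[ B_3' = (M_1 \otimes M_2) \oplus M_3 \oplus \big( \bigwedge\nolimits^2 M_1 \w \alpha_1(K_1) \big); \]
let $B_4'$ be the sum inside $B_4$ of the summands $D_2 M_2$ and $M_1 \otimes M_3$ together with $M_1 \w \alpha_1(K_1) \otimes M_2$, $\bigwedge^2 M_1 \otimes \alpha_2(K_2)$, and the submodules generated by the symmetric elements
\[ \theta_1 \w \theta_1' \otimes \alpha_1(\phi_1)\theta_1 \quad \text{and} \quad \theta_1 \w \theta_1' \otimes \alpha_1(\phi_1)\theta_1'' + \theta_1'' \w \theta_1' \otimes \alpha_1(\phi_1)\theta_1 \]
appearing in (5) and (6); and let $B_5' = B_5$ (noting that $h_5 : B_5 \to K_4 = 0$ is automatically zero). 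The hypothesis of Setup \ref{setup2} that $\alpha_1(K_1)$ is a direct summand of $M_1$ provides explicit free complements $B_i''$ in each degree.

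Two hypotheses of Lemma \ref{htpmod} then require verification. First, $c_i(B_i') = 0$: on the ``small'' summands this is immediate, since the formulas defining $c_3$ and $c_4$ in Proposition \ref{morcx} depend only on the $\bigwedge^3 M_1$-summand and the $\bigwedge^2 M_1 \otimes M_2$-summand respectively, while on the remaining subspaces this is precisely the content of Proposition \ref{cprop}. Second, $b_i(B_i') \subseteq B_{i-1}'$: this is a case-by-case check using the graded Leibniz rule defining the Tate differential. The key inputs are the chain-map identity $m_2 \circ \alpha_2 = \alpha_1 \circ k_2$ (which keeps the image of $\bigwedge^2 M_1 \otimes \alpha_2(K_2)$ inside $M_1 \otimes \alpha_2(K_2) + \bigwedge^2 M_1 \w \alpha_1(K_1) \subseteq B_3'$) and, for the closure of the subspace associated to (6), the antisymmetry identity $\theta_1 \w \theta_1' \w \theta_1'' + \theta_1'' \w \theta_1' \w \theta_1 = 0$ in $\bigwedge^3 M_1$, which annihilates the otherwise-problematic $m_1(\alpha_1(\phi_1))$-terms arising from Leibniz expansion.

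With these verifications in hand, the inductive construction in the proof of Lemma \ref{htpmod} produces the desired $h$ with $h_i|_{B_i'} = 0$. The fact that $K[-2]$ fails to be acyclic at degree $2$ causes no trouble: $c_0 = c_1 = c_2 = 0$ forces $h_0 = h_1 = h_2 = 0$ at the base of the induction, and from degree $3$ onward the acyclicity of $K$ in positive homological degrees supplies the lifts through $k_{i-2}$ needed at each step. The principal obstacle in executing this plan is the bookkeeping for the closure verifications $b_i(B_i') \subseteq B_{i-1}'$, particularly for the symmetric subspaces encoding conditions (5) and (6); once the antisymmetric cancellation in $\bigwedge^3 M_1$ is identified, the remaining Leibniz calculations reduce to routine matching of summands.
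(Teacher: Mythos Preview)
Your plan to apply Lemma \ref{htpmod} with a single decomposition $B_i = B_i' \oplus B_i''$ that simultaneously encodes all six vanishing conditions has a genuine gap at degree $4$. Lemma \ref{htpmod} requires each $B_i'$ to be a \emph{direct summand} of $B_i$, and you appeal to the splitting $M_1 = \alpha_1(K_1) \oplus M_{1,2}$ from Setup \ref{setup2} to supply complements. That splitting is indeed enough to make $\bigwedge^2 M_1 \w \alpha_1(K_1)$ a summand of $\bigwedge^3 M_1$ and $M_1 \w \alpha_1(K_1) \otimes M_2$ a summand of $\bigwedge^2 M_1 \otimes M_2$, so conditions (1)--(3) are handled exactly as you describe. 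But the submodules you propose for (4), (5), (6) are not summands in general: $\alpha_2(K_2)$ is the image of $\bigwedge^2 K_1$ under the DG-multiplication $\phi_1 \w \phi_1' \mapsto \alpha_1(\phi_1)\cdot\alpha_1(\phi_1')$ inside $M_2$, and nothing in the setup guarantees this lands in a free summand of $M_2$; still less do the diagonal submodules generated by $\theta_1 \w \theta_1' \otimes \alpha_1(\phi_1)\theta_1$ split off. Without a direct-sum decomposition the inductive step of Lemma \ref{htpmod} (extend by zero on $B_i'$) is unavailable, so your invocation of the lemma for $h_4$ does not go through.

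The paper sidesteps this by using Lemma \ref{htpmod} only for (1)--(3), where genuine splittings exist, and then exploiting a feature specific to degree $4$: the top Koszul differential $k_3 : K_3 \to K_2$ is \emph{injective}. Hence once $h_3$ is fixed, the homotopy relation $k_3 h_4 = c_4 - h_3 d$ determines $h_4$ uniquely on $\bigwedge^2 M_1 \otimes M_2$; there is nothing left to choose. For each of the subspaces in (4) and (5) one checks that the right-hand side $c_4 - h_3 d$ vanishes (using Proposition \ref{cprop} for the $c_4$ term and the already-established properties (1)--(3) for the $h_3 d$ term), and injectivity of $k_3$ then forces $h_4$ to vanish there. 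Condition (6) follows from (5) by polarization $\theta_1 \mapsto \theta_1 + \theta_1''$. Your closure computations $b_4(B_4') \subseteq B_3'$ are correct and are precisely what is needed to verify $h_3 d = 0$ on the relevant elements; the missing ingredient in your argument is the injectivity of $k_3$, which replaces the unavailable direct-sum hypothesis.
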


\begin{proof}
The existence of the homotopy follows from the fact that $c$ is a morphism of complexes with $c_0 = 0$ and $K$ is acyclic. The fact that one may arrange $h_4$ to have property $(1)$ follows from the definition of $c$ and Lemma \ref{htpmod}. It is clear that $\bigwedge^2 M_1 \w \alpha_1 (K_1)$ is a direct summand of $B_3$ and $M_1 \w \alpha_1 (K_1) \otimes M_2$ is a direct summand of $B_4$ by the splitting assumption $M_1 = \alpha_1(K_1) \oplus M_{1,2}$. This yields properties $(1)$, $(2)$, and $(3)$. Assume now that $h$ has been chosen to satisfy these properties.

For property $(4)$, applying the Tate differential yields:
\begingroup\allowdisplaybreaks
\begin{align*}
    d( \theta_1 \w \theta_1' \otimes \alpha_2 ( \phi_2) ) &= m_1 ( \theta_1 ) \theta_1 ' \otimes \alpha_2 ( \phi_2) - m_1 ( \theta_1') \theta_1 \otimes \alpha_2 ( \phi_2) \\
    &+\theta_1 \w \theta_1' \otimes \alpha_1 ( k_2 ( \phi_2) ) \\
    & \in \Big( M_1 \otimes M_2 \Big) \oplus \bigwedge^2 M_1 \w \alpha_1 (K_1). 
\end{align*}
\endgroup
By our selection of $h$, one has that $h_3 ( d( \theta_1 \w \theta_1' \otimes \alpha_2 ( \phi_2) ) = 0$. Since $c = hd + kh$, Proposition \ref{cprop} combined with the previous sentence yields
$$k_3 ( h_4 ( \theta_1 \w \theta_1' \otimes \alpha_2 ( \phi_2) ) ) = 0,$$
and since $k_3$ is injective, property $(4)$ follows.

For property $(5)$, one applies the Tate differential:
\begingroup\allowdisplaybreaks
\begin{align*}
    d ( \theta_1 \w \theta_1' \otimes \alpha_1 (\phi_1)\cdot \theta_1 ) &= m_1 ( \theta_1 ) \theta_1' \otimes \alpha_1 ( \phi_1) \theta_1 -m_1 ( \theta_1' ) \theta_1 \otimes \alpha_1 ( \phi_1) \theta_1 \\
    &+k_1 ( \phi_1) \theta_1 \w \theta_1' \w \theta_1 -  m_1( \theta_1 ) \theta_1 \w \theta_1' \w \alpha_1 ( \phi_1) \\
    &\in \Big( M_1 \otimes M_2 \Big) \oplus \bigwedge^2 M_1 \w \alpha_1 (K_1), 
\end{align*}
\endgroup
so that in an identical manner to property $(4)$, property $(5)$ follows. Finally, for property $(6)$, assume that $h$ has been chosen to satisfy property $(5)$ as well. Simply let $\theta_1 \mapsto \theta_1 + \theta_1'$ in $(5)$ to obtain $(6)$.
\end{proof}

\begin{definition}\label{Xnew}
Adopt notation and hypotheses as in Corollary \ref{h1}. Define $h_4 : \bigwedge^2 M_1 \otimes M_2 \to K_3$ by composing with the inclusion $\bigwedge^2 M_1 \otimes M_2 \to B_4$. Then, define $X : \bigwedge^2 M_1 \to M_2$, $X^t : M_1 \otimes M_2 \to M_3$ via
$$X(\theta_1 \w \theta_1' ) \cdot \theta_2 = (\beta_4^{-1} \circ h_4 ) (\theta_1 \w \theta_1' \otimes \theta_2 ) = \theta_1' \cdot X^t (\theta_1 \otimes \theta_2).$$
\end{definition}
Proposition \ref{properties} provides a list of properties that will be needed to show that the algebra structure of Theorem \ref{thm:BFSisDG} is associative and satisfies the graded Leibniz rule. The verification of these properties is tedious but necessary, and all details will be provided.

\begin{prop}\label{properties}
The maps $X$ and $X^t$ of Definition \ref{Xnew} have the following properties:
\begin{enumerate}
    \item $\beta_2 X(\theta_1 \w \theta_1' ) = 0,$
    \item $\beta_3 X^t (\theta_1 \otimes \theta_2 ) = 0,$
    \item $X^t (\theta_1 \otimes \alpha_2 ( \phi_2 ) )  =0$ and $\alpha_1 ( \phi_1)  \cdot X( \theta_1 \w \theta_1') = 0,$
    \item $\alpha_1 (\phi_1) \theta_1'' \cdot X (\theta_1 \w \theta_1') +\alpha_1 (\phi_1) \theta_1 \cdot X (\theta_1'' \w \theta_1') =0$. In particular, this implies $ \theta_1'' \cdot X (\theta_1 \w \theta_1') + \theta_1 \cdot X (\theta_1'' \w \theta_1') =0,$
    \item $m_2  X (\theta_1 \w \theta_1') = \beta_1 (\theta_1') \theta_1 - \beta_1 (\theta_1) \theta_1' - \alpha_1 \beta_2 (\theta_1 \theta_1'),$
    \item $X^t ( \theta_1 \otimes  m_3 (\theta_3) ) = \theta_1 \alpha_2 \beta_3 (\theta_3) - \beta_1 (\theta_1) \theta_3 - \alpha_3 \beta_4 (\theta_1 \theta_3),$
    \item \begin{equation*}
        \begin{split}
            X(\theta_1 \w m_2 (\theta_2) ) + m_3 X^t (\theta_1 \otimes \theta_2) &= \alpha_2 \beta_3 (\theta_1 \theta_2) - \theta_1 \alpha_1 \beta_2 (\theta_2) \\
            &\qquad - \beta_1 (\theta_1 ) \theta_2, \\
        \end{split}
    \end{equation*}
    \item \begin{equation*}
        \begin{split}
            X^t (\theta_2' \otimes m_2 (\theta_2)) + X^t (\theta_2 \otimes m_2( \theta_2') ) &= \alpha_3 \beta_4 (\theta_2 \theta_2') - \alpha_1 \beta_2 (\theta_2) \theta_2' \\
            &\qquad - \alpha_1 \beta_2 (\theta_2') \theta_2, \\
        \end{split}
    \end{equation*}
    \item $X^t ( \theta_1 \otimes X ( \theta_1'' \w \theta_1')) + X^t(\theta_1'' \otimes X( \theta_1 \w \theta_1')) = 0.$
\end{enumerate}
\end{prop}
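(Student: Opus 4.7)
My approach to Proposition \ref{properties} is uniform across all nine identities. First, translate each statement into an equality of elements in $R \cong M_4 \cong K_3$ by pairing with a test element via the Poincar\'e duality of $M$ (or $K$); next, use the defining relation $X(\theta_1 \w \theta_1') \cdot \theta_2 = \beta_4^{-1} h_4(\theta_1 \w \theta_1' \otimes \theta_2) = \theta_1' \cdot X^t(\theta_1 \otimes \theta_2)$ to recast products in $M_4$ as values of $h_4$; finally, apply the chain homotopy equation $c = hd + kh$ together with the explicit formulas for $c_3$, $c_4$, $c_5$ from Proposition \ref{morcx} to reduce everything to identities in the already understood maps $\alpha$, $\beta$, $m$, and $k$. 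Two simplifications will be used repeatedly: because $K$ is a length $3$ Koszul complex, $K_4 = 0$ (so $c_5 = h_4 d_5$ exactly) and $k_3$ is injective.

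For the vanishing identities (1)--(4), I would pair both sides with an arbitrary test element via Poincar\'e duality and invoke the corresponding clause of Corollary \ref{h1}. For instance, to prove $\beta_2 X(\theta_1 \w \theta_1') = 0$, test against an arbitrary $\phi_2 \in K_2$: the defining relation of $\beta$ gives $[\beta_2 X(\theta_1 \w \theta_1') \cdot \phi_2]_K = -[X(\theta_1 \w \theta_1') \cdot \alpha_2(\phi_2)]_M$, and substituting the definition of $X$ reduces this to $[h_4(\theta_1 \w \theta_1' \otimes \alpha_2(\phi_2))]_K$, which vanishes by Corollary \ref{h1}(4). Items (2)--(4) are analogous in spirit, each triggering a different clause of Corollary \ref{h1}.

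The differential identities (5)--(8) follow from applying the simplified homotopy relation $c_5 = h_4 d_5$ to well-chosen elements of $B_5$. For (5), take $\theta_1 \w \theta_1' \otimes \theta_3 \in \bigwedge^2 M_1 \otimes M_3$; the two summands of $d_5$ landing in $M_1 \otimes M_3 \subset B_4$ are killed by $h_4$ (Corollary \ref{h1}(1)), leaving an equation which, after translation via $\beta_4$, is precisely (5). Identity (6) is an $X^t$-reformulation of (5) obtained by multiplying by an element of $M_1$ and invoking the defining relation. Identity (7) comes from applying $c_5 = h_4 d_5$ to $\theta_1 \otimes \theta_2^{(2)} \in M_1 \otimes D_2 M_2$, whose Tate derivative mixes an $X$-term (from the $\bigwedge^2 M_1 \otimes M_2$ summand of $B_4$) with an $m_3 X^t$-term. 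Finally, (8) is obtained from (7) by polarizing the divided-power argument, replacing $\theta_2^{(2)}$ by $\theta_2 \cdot \theta_2'$ and symmetrizing.

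The main obstacle is the associativity identity (9). After multiplying by an arbitrary $\theta_1^* \in M_1$ and using the defining relation, the claim becomes
\[
h_4\bigl(\theta_1 \w \theta_1^* \otimes X(\theta_1'' \w \theta_1')\bigr) + h_4\bigl(\theta_1'' \w \theta_1^* \otimes X(\theta_1 \w \theta_1')\bigr) = 0
\]
in $K_3$, and by injectivity of $k_3$ it suffices to verify this after applying $k_3$. I would then use $c_4 = h_3 d_4 + k_3 h_4$ to rewrite each $k_3 h_4$ in terms of $c_4$ and $h_3 d_4$. The $\beta_2 X$-term in $c_4$ is killed by (1); the summands of $d_4$ that live in $M_1 \otimes M_2 \subset B_3$ are killed by $h_3$ via Corollary \ref{h1}(1); the portion of $m_2 X(\theta_1'' \w \theta_1')$ in $\alpha_1(K_1)$ that appears in the $\bigwedge^3 M_1$ piece of $d_4$ (after applying (5)) is killed by Corollary \ref{h1}(2); and the remaining $\beta_3 X$ and $h_3$ contributions pair up by the antisymmetry of the wedge in the symmetric role of $\theta_1$ and $\theta_1''$, producing the desired cancellation. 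The delicate bookkeeping of signs in this last step, and the precise way in which the carefully chosen vanishing clauses of Corollary \ref{h1} are orchestrated, is what makes (9) the technical heart of the proposition.
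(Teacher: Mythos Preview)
Your plan for items (1)--(3), the first sentence of (4), and (5)--(8) matches the paper's argument essentially verbatim: pair via Poincar\'e duality, unwind into $h_4$, and feed the result into either Corollary~\ref{h1} or the relation $c_5=h_4 d_5$. That part is fine.

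There are two genuine gaps, and they are connected.

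\textbf{The ``in particular'' clause of (4).} The identity $\theta_1''\cdot X(\theta_1\wedge\theta_1')+\theta_1\cdot X(\theta_1''\wedge\theta_1')=0$ is \emph{not} a direct consequence of any clause of Corollary~\ref{h1}: that corollary only gives the version multiplied by $\alpha_1(\phi_1)$. Stripping the $\alpha_1(\phi_1)$ off requires an honest argument. The paper starts from the special case $\alpha_1(\phi_1)\theta_1 X(\theta_1\wedge\theta_1')=0$ (Corollary~\ref{h1}(5)), applies $m_4$, and then uses that $k_1(K_1)$ has grade $\geq 2$ together with the regularity of a suitable $k_1(\phi_1)$ to cancel the $\alpha_1$ factor. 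Your proposal does not mention this step, and without it the second half of (4) is unproved.

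\textbf{Item (9).} Your route via $k_3 h_4=c_4-h_3 d_4$ does not close. Carrying it out, the $c_4$ contribution vanishes (using (1) and (3)), and after substituting (5) for $m_2X$ you are left with
\[
\beta_1(\theta_1'')\,h_3(\theta_1\wedge\theta_1^*\wedge\theta_1')\;+\;\beta_1(\theta_1)\,h_3(\theta_1''\wedge\theta_1^*\wedge\theta_1'),
\]
which is \emph{symmetric} in $(\theta_1,\theta_1'')$, not antisymmetric, and there is no reason for it to vanish. The paper takes a different route: it reduces (9) to showing $X(\theta_1\wedge\theta)\cdot X(\theta_1\wedge\theta_1')=0$ in $M_4$, applies the injective map $m_4$, expands via the Leibniz rule together with (5), kills the $\alpha_1\beta_2$ pieces using (3), and then finishes precisely by invoking the ``in particular'' part of (4) (both the special case $\theta_1 X(\theta_1\wedge\theta_1')=0$ and the general symmetric identity). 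In other words, the grade argument you skipped in (4) is exactly the missing ingredient that makes (9) go through.
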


\begin{proof}
$(1)$: 
\begingroup
\allowdisplaybreaks
\begin{align*}
        \beta_2 X(\theta_1 \w \theta_1' ) \cdot \phi_2 &= X (\theta_1 \w \theta_1' ) \cdot \alpha (\phi_2) \\
        &= (\beta_4^{-1} \circ h_4 ) (\theta_1 \ \theta_1' \otimes \alpha_2 (\phi_2) ) \\
        &= 0 \qquad \textrm{(by Corollary \ref{h1})} . \\
    \end{align*}
    \endgroup
$(2)$:
\begingroup
\allowdisplaybreaks
\begin{align*}
        \phi_1 \cdot \beta_3 X^t (\theta_1 \otimes \theta_2 ) &= - \alpha_1 (\phi_1) \cdot X^t (\theta_1 \otimes \theta_2) \\
        &= - (\beta_4^{-1} \circ h_4 ) (\alpha_1 (\phi_1) \w \theta_1 \otimes \theta_2 ) \\
        &= 0 \qquad \textrm{(by Corollary \ref{h1})}. \\
    \end{align*}
    \endgroup
$(3)$: For the first equality,
\begingroup
\allowdisplaybreaks
\begin{align*}
        \theta_1' \cdot X^t (\theta_1 \otimes \alpha_2 ( \phi_2 ) ) &= (\beta_4^{-1} \circ h_4)(\theta_1' \w \theta_1 \otimes \alpha_2 (\phi_2) ) \\
        &=0  \qquad \textrm{(by Corollary \ref{h1})}. 
    \end{align*}
    \endgroup
    For the second equality, multiply by an arbitrary $\theta \in M_1$:
    \begingroup\allowdisplaybreaks
    \begin{align*}
        \theta \alpha_1 ( \phi_1) \cdot X ( \theta_1 \w \theta_1') &= \beta_4^{-1} \circ h_3 (\theta_1 \w \theta_1' \otimes \theta \alpha_1 (\phi_1) ) \\
        &= 0 \qquad \textrm{(by Corollary \ref{h1})} .
    \end{align*}
    \endgroup
$(4)$:
\begingroup
\allowdisplaybreaks
\begin{align*}
        &\alpha_1 (\phi_1) \theta_1'' \cdot X (\theta_1 \w \theta_1') +\alpha_1 (\phi_1) \theta_1 \cdot X (\theta_1'' \w \theta_1') \\
        =& (\beta_4^{-1} \circ h_4) (\theta_1 \w \theta_1' \otimes \alpha_1 (\phi_1) \theta_1'' + \theta_1'' \w \theta_1' \otimes \alpha_1 (\phi_1) \theta_1 ) \\
        =& 0 \qquad \textrm{(by Corollary \ref{h1})}. 
   \end{align*}
    \endgroup
    To prove the additional claim, apply $m_4$ to the equality
    $$\alpha_1 (\phi_1) \theta_1 \cdot X (\theta_1 \w \theta_1') =0$$
    Recalling that $\alpha_0$ is the identity, one finds:
    \begingroup\allowdisplaybreaks
    \begin{align*}
        &k_1 ( \phi_1) \theta_1 X ( \theta_1 \w \theta_1') + \alpha_1 ( \phi_1) m_1(\theta_1) X( \theta_1 \w \theta_1') \\ 
        &+\alpha_1 ( \phi_1) \theta_1 m_2(X ( \theta_1 \w \theta_1')). 
    \end{align*}
    \endgroup
    Observe that $m_1 ( \theta_1) \alpha_1 ( \phi_1) X ( \theta_1 \w \theta_1') = 0$ by Property $(3)$. 
    
    We want to show that $\alpha_1 ( \phi_1) \theta_1 m_2 ( X ( \theta_1 \w \theta_1' ) ) = 0$. Multiplying by any $\alpha_1 ( \phi_1')$, one must obtain $0$ by property $(3)$. Applying $m_4$ and expanding using the Leibniz rule,
    \begingroup\allowdisplaybreaks
    \begin{align*}
        m_4 \Big( \alpha_1 ( \phi_1' ) \alpha_1 ( \phi_1) \theta_1 m_2 ( X (\theta_1 \w \theta_1') ) \Big) &= k_1( \phi_1 ' ) \alpha_1 ( \phi_1) \theta_1 m_2 ( X (\theta_1 \w \theta_1') ) \\
        &- k_1( \phi_1  ) \alpha_1 ( \phi_1') \theta_1 m_2 ( X (\theta_1 \w \theta_1') ) \\
        &+ m_1 ( \theta_1 ) \alpha_1 ( \phi_1') \alpha_1 ( \phi_1) m_2 ( X (\theta_1 \w \theta_1') ). 
    \end{align*}
    \endgroup
    The last term is $0$ by property $(3)$ combined with the Leibniz rule, whence the above shows that for all $\phi_1, \ \phi_1' \in K_1$,
    $$k_1( \phi_1 ' ) \alpha_1 ( \phi_1) \theta_1 m_2 ( X (\theta_1 \w \theta_1') ) = k_1( \phi_1  ) \alpha_1 ( \phi_1') \theta_1 m_2 ( X (\theta_1 \w \theta_1') )$$
    Since $\phi_1$ and $\phi_1'$ are totally arbitrary and $k_1 (K_1)$ has grade $\geq 2$, it follows that $\alpha_1 ( \phi_1 ) \theta_1 m_2 ( X ( \theta_1 \w \theta_1' ) ) = 0$. Combining this with the above, one finds $k_1 ( \phi_1) \theta_1 X ( \theta_1 \w \theta_1') = 0$. Since $k_1 ( \phi_1 )$ may be chosen to be regular, $\theta_1 X ( \theta_1 \w \theta_1') = 0$. Now let $\theta_1 \mapsto \theta_1 + \theta_1''$ to obtain the desired equality. \\

$(5)$: 
\begingroup
\allowdisplaybreaks
\begin{align*}
        m_2  X (\theta_1 \w \theta_1') \cdot \theta_3 &= -X(\theta_1 \w \theta_1' ) \cdot m_3 (\theta_3) \\
        &=-(\beta_4^{-1} \circ h_4) (\theta_1 \w \theta_1' \otimes m_3 (\theta_3) ) \\
        &= -(\beta_4^{-1} \circ h_4) (d (\theta_1 \w \theta_1' \otimes \theta_3) ) \\
        &= -\beta_4^{-1} \circ c_5 (\theta_1 \w \theta_1' \otimes \theta_3 ) \\
        &= \beta_4^{-1} \Big( \beta_1 (\theta_1' ) \beta_4 ( \theta_1 \theta_3 ) - \beta_1 (\theta_1 ) \beta_4 (\theta_1' \theta_3 ) \\
        &\qquad+ \beta_2 (\theta_1 \w \theta_1' ) \beta_3 (\theta_3) \Big) \\
        &=\Big(  \beta_1 (\theta_1' ) \theta_1 - \beta_1 (\theta_1) \theta_1' - \alpha_1 \beta_2 (\theta_1 \theta_1' ) \Big) \theta_3. 
    \end{align*}
    \endgroup
$(6)$: This follows from $(5)$ since:
\begingroup
\allowdisplaybreaks
\begin{align*}
        m_2  X (\theta_1 \w \theta_1') \cdot \theta_3 &= \theta_1' \cdot X^t (\theta_1 \otimes m_3 (\theta_3)) \\
        &=  \beta_1 (\theta_1' ) \theta_1 \theta_3 - \beta_1 (\theta_1) \theta_1' \theta_3 - \alpha_1 \beta_2 (\theta_1 \theta_1' ) \theta_3 \\
        &= - \theta_1' \alpha_3 \beta_4 (\theta_1 \theta_3 ) - \theta_1' \beta_1 (\theta_1 ) \theta_3 + \theta_1' \cdot \theta_1 \alpha_2 \beta_3 (\theta_3) \\
        &= \theta_1' \cdot \Big(  \theta_1 \alpha_2 \beta_3 (\theta_3) - \beta_1 (\theta_1) \theta_3 - \alpha_3 \beta_4 (\theta_1 \theta_3) \Big). 
   \end{align*}
    \endgroup
$(7)$: 
\begingroup
\allowdisplaybreaks
\begin{align*}
        &\Big( X(\theta_1 \w m_2 (\theta_2) ) + m_3 X^t (\theta_1 \otimes \theta_2)\Big) \cdot \theta_2' \\
        &= (\beta_4^{-1} \circ h_4) (\theta_1 \w m_2 (\theta_2 ) \otimes \theta_2' + \theta_1 \w m_2 (\theta_2') \otimes \theta_2 ) \\
        &= (\beta_4^{-1} \circ h_4 )(d (-\theta_1 \otimes \theta_2 \cdot \theta_2' )) \\
        &= \beta_4^{-1} \circ c_5 (-\theta_1 \otimes \theta_2 \cdot \theta_2' ) \\
        &= \beta_4^{-1} \Big(- \beta_1 (\theta_1) \beta_4 (\theta_2 \theta_2') + \beta_3 (\theta_1 \theta_2 ) \beta_2 (\theta_2') + \beta_3 (\theta_1 \theta_2') \beta_2 (\theta_2) \Big) \\ &= -\beta_1 (\theta_1) \theta_2 \theta_2' +  \alpha_2 \beta_3 (\theta_1 \theta_2 ) \theta_2' + \theta_1 \theta_2' \alpha_1 \beta_2 (\theta_2 ) \\
        &= \Big(- \beta_1 (\theta_1) \theta_2  +  \alpha_2 \beta_3 (\theta_1 \theta_2 )  - \theta_1'  \alpha_1 \beta_2 (\theta_2 ) \Big) \theta_2' . 
    \end{align*}
    \endgroup
$(8)$: This follows from $(7)$, since this is just the adjoint version.
\begingroup
\allowdisplaybreaks
\begin{align*}
      &\theta_1 \Big( X^t (\theta_2' \otimes m_2 (\theta_2)) + X^t (\theta_2 \otimes m_2( \theta_2') ) \Big) \\
      &=  -X(\theta_1 \w m_2(\theta_2) )\cdot \theta_2' - m_3 X^t (\theta_1 \w \theta_2 ) \cdot \theta_2' \\
      &= -\beta_1 (\theta_1) \theta_2 \theta_2' +  \alpha_2 \beta_3 (\theta_1 \theta_2 ) \theta_2' + \theta_1 \theta_2' \alpha_1 \beta_2 (\theta_2 ) \\
      &=+ \theta_1 \alpha_3 \beta_4 (\theta_2 \theta_2') - \theta_1 \theta_2 \alpha_1 \beta_2 (\theta_2') - \theta_1 \theta_2' \alpha_1 \beta_2 (\theta_2) \\
      &= \theta_1 \cdot \Big(  \alpha_3 \beta_4 (\theta_2 \theta_2') -  \theta_2 \alpha_1 \beta_2 (\theta_2') -  \theta_2' \alpha_1 \beta_2 (\theta_2) \Big). 
   \end{align*}
    \endgroup
    $(9)$: Observe that it suffices to show $X^t( \theta_1 \otimes X(\theta_1 \w \theta_1')) = 0$, since one may then substitute $\theta_1 \mapsto \theta_1 + \theta_1''$ to obtain the general case. Multiplying by an arbitrary $\theta \in M_1$, one obtains
    $$X( \theta_1 \w \theta ) X( \theta_1 \w \theta_1'),$$
    so it suffices to show this product is $0$. Since this is an element of $M_4$ and $m_4$ is injective, it suffices to show that $m_4$ applied to the above is $0$. One computes:
    \begingroup\allowdisplaybreaks
    \begin{align*}
        &m_4(X( \theta_1 \w \theta ) X( \theta_1 \w \theta_1')) \\
        =& m_2(X( \theta_1 \w \theta )) X( \theta_1 \w \theta_1') + X( \theta_1 \w \theta ) m_2(X( \theta_1 \w \theta_1')) \\
        =&\big( \beta_1 (\theta ) \theta_1 - \beta_1 (\theta_1) \theta - \alpha_1 \beta_2 (\theta_1 \theta ) \big) X( \theta_1 \w \theta_1') \\
        &+ \big( \beta_1 (\theta_1' ) \theta_1 - \beta_1 (\theta_1) \theta_1' - \alpha_1 \beta_2 (\theta_1 \theta_1' ) \big) X( \theta_1 \w \theta) \qquad \textrm{(by property} \ (5))\\
        =& \big( \beta_1 (\theta ) \theta_1 - \beta_1 (\theta_1) \theta \big) X( \theta_1 \w \theta_1') \\
        &+ (\beta_1 (\theta_1' ) \theta_1 - \beta_1 (\theta_1) \theta_1' ) X( \theta_1 \w \theta)\qquad \textrm{(by property} \ (2))\\
        =& - \beta_1 ( \theta_1) \big( \theta X( \theta_1 \w \theta_1') + \theta_1' X(\theta_1 \w \theta) \big)  \qquad \textrm{(by property} \ (4)) \\
        =& 0 \qquad \textrm{(again, by property} \ (4)).
    \end{align*}
    \endgroup
\end{proof}

\section{The Length 4 Big From Small Construction is a DG $R$-Algebra}\label{itsdg}

In this section, we prove the main result of the paper. Theorem \ref{thm:BFSisDG} states that the length $4$ big from small construction as in Definition \ref{bfs} admits the structure of a commutative, associative DG$\Gamma$ $R$-algebra exhibiting Poincar\'e duality. The method of proof is to write down an explicit product and directly verify that the properties of Definition \ref{dg} are satisfied. The product given in Theorem \ref{thm:BFSisDG} is inspired by a similar product used in \cite{minres}.

It is also worth noting that Theorem \ref{thm:BFSisDG} holds even if one only assumes in Setup \ref{setup2} that the complex $M$ is a length $4$ resolution of a cyclic $R$-module with $\rank (M_4) = 1$ and $M \cong \hom_R (M , M_4)$, since Theorem \ref{l4resdg} may be employed.

\begin{theorem}\label{thm:BFSisDG}
Adopt notation and hypotheses as in Setup \ref{setup2}. Then the complex $F(\alpha,r)$ of Definition \ref{bfs} admits the structure of a commutative associative DG $R$-algebra exhibiting Poincar\'e duality via the following multiplication:
\begingroup
\allowdisplaybreaks
\begin{align*}
    F_1 \otimes F_1 &\to F_2 \\
    \begin{pmatrix} \phi_1 \\
    \theta_1 \\
    \end{pmatrix} \begin{pmatrix} \phi_1' \\
    \theta_1' \\
    \end{pmatrix} &= \begin{pmatrix} \phi_1 \phi_1' \\
    - \alpha_1 (\phi_1) \theta_1' - \theta_1 \alpha_1 ( \phi_1') - r \theta_1 \theta_1' + X( \theta_1 \w \theta_1')  \\
    \alpha_1 ( \theta_1 ) \phi_1' - \alpha_1 (\phi_1') \theta_1 + \beta_2 (\theta_1 \theta_1 ' ) \\
    \end{pmatrix} \\
    F_1 \otimes F_2 &\to F_3 \\
    \begin{pmatrix} \phi_1 \\
    \theta_1 \\
    \end{pmatrix} \begin{pmatrix} \phi_2 \\
    \theta_2 \\
    \phi_1' \\
    \end{pmatrix} &= \begin{pmatrix} \theta_1\alpha_2 (\phi_2) - [\phi_1 \phi_2]_K \alpha_4 (h) - \alpha_1 (\phi_1) \theta_2 - r \theta_1 \theta_2 + X^t (\theta_2 \otimes \theta_1) \\
    \phi_1 \phi_1' - m_1 (\theta_1) \phi_2 - \beta_3 (\theta_1 \theta_2) \\
    \end{pmatrix} \\
    F_1 \otimes F_3 &\to F_4 \\
    \begin{pmatrix} \phi_1 \\
    \theta_1 \\
    \end{pmatrix} \begin{pmatrix} \theta_3 \\
    \phi_2 \\
    \end{pmatrix} &= [\phi_1 \phi_2 ]_K h - \theta_1 \theta_3 \\
    F_2 \otimes F_2 &\to F_4 \\
    \begin{pmatrix} \phi_2 \\
    \theta_2 \\
    \phi_1 \\
    \end{pmatrix} \begin{pmatrix} \phi_2' \\
    \theta_2' \\
    \phi_1' \\
    \end{pmatrix} &= [\phi_2 \phi_1' ]_K h + [\phi_1 \phi_2' ]_K h - \theta_2 \theta_2' ,
\end{align*}
\endgroup
where $h \in M_4$ is such that $[h]_M = 1$ and the maps $X$ and $X^t$ are defined in Definition \ref{Xnew}.
\end{theorem}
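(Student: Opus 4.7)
The plan is to verify the DG $R$-algebra axioms directly from the explicit product formulas, systematically exploiting the identities of Proposition \ref{properties}. The verification splits into four pieces: graded commutativity, the Leibniz rule, associativity, and Poincar\'e duality.

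Graded commutativity can be read off summand by summand. For $F_1 \otimes F_1 \to F_2$ the $\phi_1\phi_1'$ and $\beta_2(\theta_1\theta_1')$ entries are skew because $K$ and $M$ are; $X(\theta_1 \w \theta_1')$ is skew because it is defined on $\bigwedge^2 M_1$; the mixed $\alpha_1$-contributions and the $r\theta_1\theta_1'$ term are manifestly skew under the swap. The pairings $F_1 \otimes F_2$, $F_1 \otimes F_3$, and $F_2 \otimes F_2$ are only defined in one order and the opposites are declared by graded commutativity; consistency is automatic since no overlap of definitions occurs. One also needs $x^2 = 0$ for $x \in F_1$, which follows from skew-symmetry together with the vanishing of $X$ on $\theta_1 \w \theta_1$.

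For the Leibniz rule I would expand $f_{i+j}(xy)$ using Definition \ref{bfs} and compare with $f_i(x)y + (-1)^i x f_j(y)$ for each bilinear pairing. The key inputs are: $\alpha$ is a DG map; $\beta$ is a chain map satisfying $\beta_{i+j}(\alpha_i(\phi)\theta_j) = \phi\,\beta_j(\theta_j)$ and $\beta \circ \alpha = 0$; and identities (5)--(8) of Proposition \ref{properties}, which were designed to give precisely the formulas for $m_2 X$, $m_3 X^t$, and the mixed differentials of $X$ and $X^t$ in terms of $\alpha\beta$-contributions. The check is routine but voluminous.

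Associativity is the main obstacle. For three inputs in $F_1$ landing in $F_3$, expanding $(xy)z - x(yz)$ and cancelling via $\alpha_i(\phi)\alpha_j(\phi') = \alpha_{i+j}(\phi \w \phi')$ together with the $\alpha$--$\beta$ compatibility reduces the obstruction in the $M_3$-component precisely to the Jacobi-type expression
\[
X^t(\theta_1 \otimes X(\theta_1'' \w \theta_1')) + X^t(\theta_1'' \otimes X(\theta_1 \w \theta_1')),
\]
which vanishes by Proposition \ref{properties}(9); this is exactly why that property was proved. For triples of mixed types landing in $F_4$, the residual obstructions are absorbed by $\beta_2 X = 0$, $\beta_3 X^t = 0$, $X^t(\theta_1 \otimes \alpha_2(\phi_2)) = 0$, and identity (4) of Proposition \ref{properties}; the remaining triples either land in $F_5 = 0$ or are products into $F_4 \cong R$ whose associativity can be verified by pairing against $F_1$ and $F_2$, reducing to cases already treated. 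Poincar\'e duality then follows from the block decompositions $F_1 = K_1 \oplus M_1$, $F_2 = K_2 \oplus M_2 \oplus K_1$, $F_3 = M_3 \oplus K_2$: the diagonal blocks of $F_i \otimes F_{4-i} \to F_4$ are the Poincar\'e pairings on $M$ and the Koszul self-dualities of $K$, both perfect, while the off-diagonal entries involving $X$, $X^t$, and $r$ are upper-triangular corrections that do not affect invertibility. The divided powers on $F_{2i}$ are inherited from those on $M_{2i}$ and $K_{2i}$ via the same decomposition.
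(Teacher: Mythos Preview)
Your plan is essentially the same as the paper's: write down the product and verify graded commutativity, the Leibniz rule, and associativity case by case, invoking exactly the identities of Proposition~\ref{properties} at the right spots. One small refinement: in the $F_1\otimes F_1\otimes F_1\to F_3$ associativity, the $M_3$-component does not reduce \emph{only} to the Jacobi expression handled by~(9); after the obvious cancellations one is also left with terms of the form $X(\theta_1'\wedge\theta_1'')\alpha_1(\phi_1)$ and $r\bigl(\theta_1 X(\theta_1'\wedge\theta_1'')-\theta_1'' X(\theta_1\wedge\theta_1')\bigr)$ and mixed $X^t(\theta\otimes\alpha_1(\phi)\theta')$ terms, which are killed respectively by property~(3), property~(4), and the Corollary~\ref{h1} identities---so you need (3), (4), and~(9) together, not just~(9). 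Your Poincar\'e duality argument via the block-triangular form of the pairings is the natural one and is left implicit in the paper.
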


\begin{proof}
We first show that associatvity holds for $3$ elements of degree $1$. Consider the following associativity term:
\begingroup\allowdisplaybreaks
\begin{align*}
    \begin{pmatrix} \phi_1 \\
    \theta_1 \\
    \end{pmatrix} \Bigg( \begin{pmatrix} \phi_1' \\
    \theta_1' \\
    \end{pmatrix} \begin{pmatrix} \phi_1'' \\
    \theta_1'' \\
    \end{pmatrix} \Bigg) - \begin{pmatrix} \phi_1'' \\
    \theta_1'' \\
    \end{pmatrix} \Bigg( \begin{pmatrix} \phi_1 \\
    \theta_1 \\
    \end{pmatrix} \begin{pmatrix} \phi_1' \\
    \theta_1' \\
    \end{pmatrix} \Bigg).
\end{align*}
\endgroup
Let us first compute the top entry of the above expression:
\begingroup\allowdisplaybreaks
\begin{align*}
    &\theta_{{1}}\alpha \left( \phi_1'\phi_1'' \right) - \left( 
\theta_1''\alpha \left( \phi_1' \right) -\theta_1'\alpha
 \left( \phi_1'' \right) -r\theta_1'\theta_1''+X \left( 
\theta_1'\w \theta_1'' \right)  \right) \alpha \left( \phi_{{1}}
 \right) \\
 &-r\theta_{{1}} \left( \theta_1''\alpha \left( \phi_1'
 \right) -\theta_1'\alpha \left( \phi_1'' \right) -r\theta_1'\theta_1''+X \left( \theta_1' \w \theta_1'' \right) 
 \right) \\
 &+{\it X^t} \left( \theta_{{1}} \otimes \left( \theta_1''\alpha
 \left( \phi_1' \right) -\theta_1'\alpha \left( \phi_1''
 \right) -r\theta_1'\theta_1''+X \left( \theta_1' \w\theta_1'' \right)  \right)  \right) \\
&-\theta_1''\alpha \left( \phi_{{1}
}\phi_1' \right) + \left( \theta_1'\alpha \left( \phi_{{1}}
 \right) -\theta_{{1}}\alpha \left( \phi_1' \right) -r\theta_{{1}}
\theta_1'+X \left( \theta_{{1}} \w \theta_1' \right)  \right) 
\alpha \left( \phi_1'' \right) \\
&+r\theta_1'' \left( \theta_1'\alpha \left( \phi_{{1}} \right) -\theta_{{1}}\alpha \left( \phi_1' \right) -r\theta_{{1}}\theta_1'+X \left( \theta_{{1}} \w \theta_1' \right)  \right)\\
& -{\it X^t} \left( \theta_1'' \otimes \left( \theta_1'\alpha \left( \phi_{{1}} \right) -\theta_{{1}}\alpha \left( \phi_1' \right) -r\theta_{{1}}\theta_1'+X \left( \theta_{{1}}\w \theta_1' \right)  \right)  \right). 
\end{align*}
\endgroup
After cancelling off the easy terms, one is left with:
\begingroup\allowdisplaybreaks
\begin{align*}
    &-X \left( 
\theta_1'\w \theta_1'' \right)  \alpha \left( \phi_{{1}}
 \right) \\
 &-r\theta_{{1}} X \left( \theta_1' \w \theta_1'' \right)  \\
 &+{\it X^t} \left( \theta_{{1}} \otimes \left( \theta_1''\alpha
 \left( \phi_1' \right) -\theta_1'\alpha \left( \phi_1''
 \right) -r\theta_1'\theta_1''+X \left( \theta_1' \w\theta_1'' \right)  \right)  \right) \\
&+X \left( \theta_{{1}} \w \theta_1' \right) 
\alpha \left( \phi_1'' \right) \\
&+r\theta_1'' X \left( \theta_{{1}} \w \theta_1' \right) \\
& -{\it X^t} \left( \theta_1'' \otimes \left( \theta_1'\alpha \left( \phi_{{1}} \right) -\theta_{{1}}\alpha \left( \phi_1' \right) -r\theta_{{1}}\theta_1'+X \left( \theta_{{1}}\w \theta_1' \right)  \right)  \right). 
\end{align*}
\endgroup
Notice that
$$X^t ( \theta_1 \otimes \theta_1'' \alpha ( \phi_1') + \theta_1'' \otimes \theta_1 \alpha ( \phi_1')) = 0,$$
since after taking the product with any other $\theta \in M_1$, one finds
$$-\beta_4^{-1} \circ h_4 ( \theta_1 \w \theta \otimes \theta_1'' \alpha ( \phi_1') + \theta_1'' \w \theta \otimes \theta_1 \alpha ( \phi_1')),$$
and this is $0$ by Corollary \ref{h1}. Two other terms like this cancel in a similar fashion. One then has:
\begingroup\allowdisplaybreaks
\begin{align*}
    &-X \left( 
\theta_1'\w \theta_1'' \right)  \alpha \left( \phi_{{1}}
 \right) + X \left( \theta_{{1}} \w \theta_1' \right) 
\alpha \left( \phi_1'' \right) \\
 &-r\theta_{{1}} X \left( \theta_1' \w \theta_1'' \right) +r\theta_1'' X \left( \theta_{{1}} \w \theta_1' \right) \\
 &+{\it X^t} \left( \theta_1 \otimes X \left( \theta_1' \w\theta_1'' \right)  \right) -{\it X^t} \left( \theta_1'' \otimes X \left( \theta_{{1}}\w \theta_1' \right)  \right). 
\end{align*}
\endgroup
After multiplying by an arbitrary $\theta \in M_1$, the first term of the top line is:
$$-\beta_4^{-1} \circ h_4 ( \theta_1' \w \theta_1'' \otimes \theta \alpha( \phi_1) ).$$
This is $0$ by property $(3)$ of Proposition \ref{properties}. Similarly for the second term. For the middle expression, this vanishes by Property $(4)$ of Proposition \ref{properties}, and the final $2$ terms vanish by Property $(9)$ of \ref{properties}.

The expression in the bottom entry is then computed as:
\begingroup\allowdisplaybreaks
\begin{align*}
    &\phi_{{1}} \left( m \left( \theta_1' \right) \phi_1''-m
 \left( \theta_1'' \right) \phi_1'+\beta \left( \theta_1'
\theta_1'' \right)  \right) \\
&-m \left( \theta_{{1}} \right) \phi_1'\phi_1''-\beta \left( \theta_{{1}} \left( \theta_1''\alpha
 \left( \phi_1' \right) -\theta_1'\alpha \left( \phi_1''
 \right) -r\theta_1'\theta_1''+X \left( \theta_1' \w \theta_1'' \right)  \right)  \right) \\
 &-\phi_1'' \left( m \left( \theta_{{1}} \right) \phi_1'-m \left( \theta_1' \right) \phi_{{1}}+
\beta \left( \theta_{{1}}\theta_1' \right)  \right)\\
&+m \left( \theta_1'' \right) \phi_{{1}}\phi_1'+\beta \left( \theta_1'' \left( \theta_1'\alpha \left( \phi_{{1}} \right) -\theta_{{1}}
\alpha \left( \phi_1' \right) -r\theta_{{1}}\theta_1'+X
 \left( \theta_{{1}} \w \theta_1' \right)  \right)  \right). 
\end{align*}
\endgroup
Recall that $\beta \circ X = 0$, so all $X$ terms vanish. Using that $\beta ( \alpha ( \phi_i ) \theta_j ) = \phi_i \beta ( \theta_j)$, more terms cancel in this way. The rest of the terms cancel without any special properties.

For the next associativity term, take the product of $2$ elements in degree $1$ and $1$ element in degree $2$ to obtain:
\begingroup\allowdisplaybreaks
\begin{align*}
&[ \phi_{{1}} \left( \phi_1'\phi_1''-m \left( \theta_1' \right) \phi_{{2}}-\beta \left( \theta_1'\theta_{{2}}
 \right)  \right) ] h \\
 &-\theta_{{1}} \left( \theta_1'\alpha
 \left( \phi_{{2}} \right) -[\phi_1'\phi_{{2}} ] m
 \left( h \right) -\theta_{{2}}\alpha \left( \phi_1' \right) -r
\theta_1'\theta_{{2}}+{\it X^t} \left( \theta_1'\theta_{{2}}
 \right)  \right) \\
 &-[ \phi_{{2}} \left( m \left( \theta_{{1}}
 \right) \phi_1'-m \left( \theta_1' \right) \phi_{{1}}+\beta
 \left( \theta_{{1}}\theta_1' \right)  \right)  ] h-[ \phi_1''\phi_{{1}}\phi_1' ] h\\
 &+\theta_{{2}}
 \left( \theta_1'\alpha \left( \phi_{{1}} \right) -\theta_{{1}}
\alpha \left( \phi_1' \right) -r\theta_{{1}}\theta_1'+X
 \left( \theta_{{1}}\theta_1' \right)  \right). 
 \end{align*}
 \endgroup
 For the nontrivial cancellations, observe first that $[ \phi_1 \beta ( \theta_1' \theta_2) ] h = \theta_2 \theta_1' \alpha( \phi_1)$; this cancels with the first term on the bottom line. A second term cancels similarly. There are $4$ terms leftover after all trivial cancellations. Firstly, there is the expression
 $$- \theta_1 X^t ( \theta_1' \otimes \theta_2) + \theta_2 X ( \theta_1 \w \theta_1'),$$
 but this is zero by definition. Lastly, one has
 $$[\phi_1' \phi_2] \theta_1 m( h) - [ \phi_2 m ( \theta_1) \phi_1' ] h.$$
 Since $M_5 = 0$, $\theta_1 m(h) = m( \theta_1) h$, so the above is $0$. This proves associativity.
 
 For the Leibniz rule on two elements of degree $1$, the top entry is
 \begingroup\allowdisplaybreaks
 \begin{align*}
     &k(\phi_1 \phi_1') + \beta (\theta_1' \alpha ( \phi_1)) - \beta ( \theta_1 \alpha ( \phi_1')) - r \beta ( \theta_1 \theta_1') \\
     &+ \beta (X ( \theta_1 \theta_1')) + r m( \theta_1) \phi_1' - r m( \theta_1' ) \phi_1 + r \beta ( \theta_1 \theta_1' ) \\
     &-k ( \phi_1) \phi_1' - \beta ( \theta_1) \phi_1' -r m( \theta_1) \phi_1'\\ 
     &+ k ( \phi_1') \phi_1 + \beta ( \theta_1') \phi_1 +r m( \theta_1') \phi_1.
 \end{align*}
 \endgroup
 Recalling that $\beta \circ X = 0$, all other terms cancel trivially. For the bottom entry, this is computed as:
 \begingroup\allowdisplaybreaks
 \begin{align*}
     &-m (\theta_1' \alpha ( \phi_1) ) + m ( \theta_1 \alpha ( \phi_1')) + r m( \theta_1 \theta_1') \\
     &-m (X( \theta_1 \theta_1')) - \alpha ( m( \theta_1) \phi_1') + \alpha ( m( \theta_1' ) \phi_1) + \alpha ( \beta ( \theta_1 \theta_1') ) \\
     &-k ( \phi_1) \theta_1' - \beta ( \theta_1) \theta_1' -r m( \theta_1) \theta_1'\\ 
     &+ k ( \phi_1') \theta_1 + \beta ( \theta_1')\theta_1 +r m( \theta_1') \theta_1\\
     =&-m ( X ( \theta_1 \w \theta_1' ) + \alpha( \beta ( \theta_1 \theta_1')) - \beta ( \theta_1) \theta_1' + \beta( \theta_1') \theta_1 . 
 \end{align*}
 \endgroup
 By property $(5)$ of Proposition \ref{properties}, this is $0$. For the Leibniz rule on elements of degree $1$ and $2$, the top entry is computed as:
 \begingroup\allowdisplaybreaks
 \begin{align*}
     &\beta ( \theta_1 \alpha ( \phi_2) ) - [\phi_1 \phi_2] \beta ( m (h)) - \beta ( \theta_2 \alpha ( \phi_1)) - r \beta ( \theta_1 \theta_2) \\
     &+ \beta (X^t ( \theta_1 \theta_2) - r \phi_1 \phi_1' + r m( \theta_1) \phi_2 + r \beta ( \theta_1 \theta_2) \\
     &-k ( \phi_1) \phi_2 - \beta ( \theta_1) \phi_2 -r m( \theta_1) \phi_2\\
     &+ \phi_1 k ( \phi_2) + \phi_1 \beta ( \theta_2) + r \phi_1 \phi_1'. 
 \end{align*}
 \endgroup
 Almost all terms cancel trivially. Recall that $[\phi_1 \phi_2] \beta ( m(h)) = k(\phi_1 \phi_2)$ to see that these terms cancel. The middle entry is
 \begingroup\allowdisplaybreaks
 \begin{align*}
     &-m ( \theta_1 \alpha ( \phi_2) ) + [\phi_1 \phi_2] m ( m (h)) + m ( \theta_2 \alpha ( \phi_1)) + r m ( \theta_1 \theta_2) \\
     &- m (X^t ( \theta_1 \otimes \theta_2) )- \alpha( \phi_1 \phi_1') + \alpha( m( \theta_1) \phi_2) + \alpha( \beta ( \theta_1 \theta_2)) \\
     &-k ( \phi_1) \theta_2 - \beta ( \theta_1) \theta_2 -r m( \theta_1) \theta_2\\
     &- m( \theta_2) \alpha ( \phi_1) - \alpha ( \phi_1') \alpha ( \phi_1) - \theta_1 \alpha ( k ( \phi_2)) - \theta_1 \alpha ( \beta ( \theta_2)) - r \theta_1 \alpha ( \phi_1') \\
     &+r \theta_1 m( \theta_2) + r \theta_1 \alpha ( \phi_1') - X ( \theta_1 \w m(\theta_2) ) - X( \theta_1 \w \alpha( \phi_1') ) \\
     =& - m ( X^t ( \theta_1 \otimes  \theta_2)) + \alpha ( \beta ( \theta_1 \theta_2)) \\
     &-\beta ( \theta_1 ) \theta_2 - \theta_1 \alpha ( \beta ( \theta_2)) - X( \theta_1 \w \m( \theta_2)). 
 \end{align*}
 \endgroup
 By Property $(7)$ of Proposition \ref{properties}, this is $0$. The bottom entry is:
 \begingroup\allowdisplaybreaks
 \begin{align*}
     &k(\phi_1 \phi_1') - m( \theta_1) k ( \phi_2) - k ( \beta ( \theta_1 \theta_2) \\
     &-k ( \phi_1) \phi_1' - \beta ( \theta_1) \phi_1' -r m( \theta_1) \phi_1'\\
     &m( \theta_1) k( \phi_2) + m ( \theta_1) \beta ( \theta_2) + r m( \theta_1) \phi_1' + m( \alpha (\phi_1') ) \phi_1 \\
     &- \beta ( \theta_1 m( \theta_2) ) - \beta ( \theta_1 \alpha ( \phi_1')), 
 \end{align*}
 \endgroup
 and these terms cancel without any additional properties. For the Leibniz rule on elements of degree $1$ and $3$, the top entry is:
 \begingroup\allowdisplaybreaks
 \begin{align*}
     &[\phi_1 \phi_2] \alpha ( \beta (h)) - \alpha ( \beta ( \theta_1 \theta_3)) - r[\phi_1 \phi_2] m(h) + r m( \theta_1 \theta_2) \\
     &-k ( \phi_1) \theta_3 - \beta ( \theta_1) \theta_3 -r m( \theta_1) \theta_3\\
     &+ \theta_1 \alpha ( \beta ( \theta_3)) - r \theta_1 \alpha ( \phi_2) - [\phi_1 \beta (\theta_3) - r \phi_1 \phi_2 ] m ( h) \\
     &+m( \theta_3) \alpha ( \phi_1) + \alpha ( \phi_2) \alpha ( \phi_1) + r \theta_1 m( \theta_3) + r \theta_1 \alpha ( \phi_2) \\
     &- X^t ( \theta_1 \otimes m ( \theta_3)) - X^t ( \theta_1 \otimes \alpha (\phi_2)) \\
     =& -\alpha ( \beta ( \theta_1 \theta_3)) - \beta ( \theta_1 ) \theta_3 + \theta_1 \alpha ( \beta ( \theta_3)) \\
     &-X^t ( \theta_1 \otimes m( \theta_3) ) .
 \end{align*}
 \endgroup
 By property $(6)$ of Proposition \ref{properties}, this is $0$. The bottom entry is:
 \begingroup\allowdisplaybreaks
 \begin{align*}
     &-[\phi_1 \phi_2] k(\beta(h)) + k ( \beta(\theta_1 \theta_3)) \\
     &-k ( \phi_1) \phi_2 - \beta ( \theta_1) \phi_2 -r m( \theta_1) \phi_2\\
     &+ \phi_1 k ( \phi_2) - m(\theta_1) \beta ( \theta_3) + r m( \theta_1) \phi_2 \\
     &+ \beta ( \theta_1 m( \theta_3) ) + \beta (\theta_1  \alpha ( \phi_2)). 
 \end{align*}
 \endgroup
 Again, recalling that $\beta ( \alpha ( \phi_1) \theta_1) = \phi_1 \beta ( \theta_1)$, the above terms cancel easily. 
 
 For the Leibniz rule on elements of degree $1$ and $4$:
 \begingroup\allowdisplaybreaks
 \begin{align*}
     &-k ( \phi_1) \theta_4 - \beta ( \theta_1) \theta_4 -r m( \theta_1) \theta_4\\
     &-[\phi_1 k(\beta(\theta_4))] h - \theta_1 \alpha(\beta(\theta_4)) + r \theta_1 m( \theta_4). 
 \end{align*}
 \endgroup
 Notice $[\phi_1 k \beta ( \theta_4) ]h =- k(\phi_1) [ \theta_4] h = -k( \phi_1) \theta_4$ and $\theta_1 \alpha ( \beta ( \theta_4) ) = \beta ( \theta_1 ) \theta_4$, so these terms cancel. For the Leibniz rule on elements both of degree $2$, the top entry is:
 \begingroup\allowdisplaybreaks
 \begin{align*}
     &[\phi_1' \phi_2]\alpha(\beta( h)) + [\phi_1 \phi_2' ] \alpha(\beta(h)) - \alpha(\beta(\theta_2 \theta_2')) \\
     &-r[\phi_1' \phi_2]m(h) - [\phi_1 \phi_2' ] rm(h) +rm(\theta_2 \theta_2') \\
     &+m(\theta_2) \alpha ( \phi_2') + \alpha ( \phi_1) \alpha ( \phi_2') \\
     &+ [r k (\phi_2) \phi_2'+ \beta ( \theta_2) \phi_2' + r \phi_1 \phi_2' ] m(h) + \theta_2' \alpha ( k(\phi_2)) \\
     &+ \theta_2' \alpha ( \beta ( \theta_2)) + r \theta_2' \alpha ( \phi_1) - r m( \theta_2) \theta_2' - r\alpha ( \phi_1) \theta_2' \\
     &+X^t ( m ( \theta_2) \otimes \theta_2') + X^t ( \alpha(\phi_1) \otimes \theta_2') \\
     &+m(\theta_2') \alpha ( \phi_2) + \alpha ( \phi_1') \alpha ( \phi_2) \\
     &+ [r k (\phi_2') \phi_2+ \beta ( \theta_2') \phi_2 + r \phi_1' \phi_2 ] m(h) + \theta_2 \alpha ( k(\phi_2')) \\
     &+ \theta_2 \alpha ( \beta ( \theta_2')) + r \theta_2 \alpha ( \phi_1') - r m( \theta_2') \theta_2 - r\alpha ( \phi_1') \theta_2 \\
     &+X^t ( m ( \theta_2') \otimes \theta_2) + X^t ( \alpha(\phi_1') \otimes \theta_2) \\
     =&- \alpha ( \beta ( \theta_2 \theta_2' )) + \theta_2' \alpha ( \beta ( \theta_2) ) + X^t ( m( \theta_2 ) \otimes \theta_2') \\
     &+ \theta_2 \alpha ( \beta ( \theta_2')) + X^t ( m ( \theta_2') \otimes \theta_2). 
 \end{align*}
 \endgroup
 By property $(8)$ of Proposition \ref{properties}, this final term is $0$. The bottom entry is:
 \begingroup\allowdisplaybreaks
 \begin{align*}
     &-[\phi_1' \phi_2] k ( \beta(h)) -[\phi_1 \phi_2'] k ( \beta(h)) + k ( \beta ( \theta_2 \theta_2')) \\
     &-k(\phi_2) \phi_1' - \beta( \theta_2) \phi_1' - r \phi_1 \phi_1' \\
     &- m(\alpha(\phi_1))\phi_2' -\beta(m(\theta_2) \theta_2') - \beta(\alpha(\phi_1)\theta_2') \\
     &-k(\phi_2') \phi_1 - \beta( \theta_2') \phi_1 - r \phi_1' \phi_1 \\
     &- m(\alpha(\phi_1'))\phi_2 -\beta(m(\theta_2') \theta_2) - \beta(\alpha(\phi_1')\theta_2), 
 \end{align*}
 \endgroup
and everything cancels trivially, keeping in mind that $[\phi_1 \phi_2'] k ( \beta ( h)) = k( \phi_1 \phi_2')$. For the Leibniz rule on elements of degree $2$ and $3$:
\begingroup\allowdisplaybreaks
\begin{align*}
    &[k(\phi_2) \phi_2' + \beta( \theta_2) \phi_2' + r \phi_1 \phi_2' ] h\\
    &+ m( \theta_2) \theta_3 + \alpha( \phi_1) \theta_3 \\
    &+[\phi_2 k( \phi_2') ] h + [\phi_1 \beta( \theta_3) - r \phi_1 \phi_2'] h \\
    &+ \theta_2 m( \theta_3) + \theta_2 \alpha ( \phi_2'), 
\end{align*}
\endgroup
and again, all of these terms cancel trivially. This concludes the proof.
\end{proof}

\end{document}